\documentclass[12pt]{amsart}
\usepackage{euscript}
\usepackage{a4wide}
\usepackage[T2A]{fontenc}
\usepackage[utf8]{inputenc}
\usepackage[english]{babel}
\usepackage{amsfonts}
\usepackage{amssymb, amsthm}
\usepackage{amsmath}
\usepackage{graphicx}
\usepackage{booktabs} 
\usepackage{xcolor}

\definecolor{webgreen}{rgb}{0,.5,0}
\definecolor{webbrown}{rgb}{.6,0,0}
\definecolor{RoyalBlue}{cmyk}{1, 0.50, 0, 0}
\definecolor{black}{cmyk}{60,40,40,100}
\usepackage[colorlinks=true, breaklinks=true, urlcolor=webbrown, linkcolor=RoyalBlue, citecolor=webgreen]{hyperref}

\renewcommand{\epsilon}{\varepsilon} 
\renewcommand{\phi}{\varphi} 

\def\geq{\geqslant}
\def\ge{\geqslant}

\def\le{\leqslant}

\def\ls{\lesssim}
\def\gs{\gtrsim}

\def\ol{\overline}
\def\kratno{\lower.5ex\hbox{$\,\vdots\,$}}

\def\dotline{\smallskip\hbox to \hsize{\dotfill}\medskip}

\def\norm[#1]{\left\| #1 \right\|}

\newcommand{\R}{\mathbb{R}}

\newcommand{\Cm}{\mathbb{C}}

\newcommand{\Meas}{\mathcal{M}}
\newcommand{\PW}{\mathcal{PW}}

\renewcommand{\Im}{\mathop{\mathrm{Im}}\nolimits}

\newcommand{\supp}{\mathop{\mathrm{supp}}\nolimits}
\newcommand{\dist}{\mathop{\mathrm{dist}}\nolimits}

\theoremstyle{plain}

\newtheorem{thm}{Theorem}[section]
\newtheorem{corol}[thm]{Corollary}
\newtheorem{prop}[thm]{Proposition}
\newtheorem{lemma}[thm]{Lemma}

\title[Christoffel functions]{Christoffel functions of measures on the real line with divergent logarithmic integral}
\author{Pavel Gubkin}

\thanks{The work was performed at the Saint Petersburg Leonhard Euler International Mathematical Institute and supported by the Ministry of Science and Higher Education of the Russian Federation (agreement no. 075–15–2025–343). The author is a winner of the BASIS PhD Student competition and would like to thank its sponsors and jury}

\address{
	\begin{flushleft}
		Pavel Gubkin: gubkinpavel@pdmi.ras.ru, gubkin.pv@yandex.ru \\
        \vspace{0.1cm}
		 Saint Petersburg University\\
         7/9 Universitetskaya nab., 199034 St. Petersburg, Russia \\
         \vspace{0.1cm}
        St. Petersburg Department of Steklov Mathematical Institute, Russian Academy of Sciences\\ 
		Fontanka 27, 191023 St. Petersburg, Russia
	\end{flushleft}
}

\begin{document}

\begin{abstract}
    Let $\sigma$ be a Poisson-finite measure on the real line. If its logarithmic integral diverges, then the 
    functions from the classical Hardy space $H^2$ with compactly supported Fourier transform are dense in $L^2(\R, \sigma)$. We give a quantitative version of this result, adapting the ideas from the 2020 paper \cite{Borichev2020} by Borichev, Kononova and Sodin, where a similar question was studied in the setting of polynomial approximation. 
\end{abstract}

\maketitle

\section{Introduction}

Let $\sigma$ be a Borel measure on the real line $\R$ that satisfies
\begin{gather*}
    \int_\R\frac{d\sigma(x)}{1 + x^2} < \infty.
\end{gather*}
Consider the set $\mathcal X$ consisting of the functions $f$ that admit a representation
\begin{gather*}
    f(\lambda)= \int_{r_1}^{r_2}e^{i\lambda x}g(x)\,dx,
\end{gather*}
where $0\le r_1 < r_2$ and $g\in C^1[r_1,r_2]$.
Notice that every $f\in \mathcal{X}$ satisfies $|f(x)|\le \frac{C}{1 + |x|}$ with some constant $C > 0$ hence $\mathcal X\subset L^2(\R, \sigma)$. The classical result, see Theorem 8.1 in \cite{Denisov2006} or Section 4.2 in \cite{Dym2008Book}, states that $\mathcal{X}$ is dense in $L^2(\R, \sigma)$ if and only if 
\begin{gather}
    \label{eq: szego assertion}
    \int_\R\frac{\log \sigma'(x)\,dx}{1 + x^2} = -\infty.
\end{gather}
Here $\sigma'$ is the density of the absolutely continuous part of $\sigma$ with respect to the Lebesgue measure on the real line. Further, for every $r > 0$ we denote by $\PW_{[0,r]}$ the Paley-Wiener space of functions with spectrum in $[0,r]$, i.e., functions of the form
\begin{gather*}
    f(\lambda)
    =
    \int_{0}^{r} e^{i\lambda x}g(x)\,dx,
    \qquad
    g\in L^2([0,r]).
\end{gather*}
Denote $\mathcal S = \cup_{r > 0}\PW_{[0,r]}$. We have $\mathcal S\subset L^2(\R, \sigma)$ if and only if $\sup_{x}\sigma([x, x + 1]) < \infty$ and in this case $L^2(\R,\sigma)$-closure of $\mathcal{S}$ coincides with the  $L^2(\R,\sigma)$-closure of $\mathcal{X}$, see \cite{lin1965equivalent} or Section \ref{sect: paley-wiener embedding} below.
In general the latter may not hold, however the divergence of the integral in \eqref{eq: szego assertion} implies the fact that $\mathcal S\cap L^2(\R, \sigma)$ is dense in $L^2(\R, \sigma)$. 
\medskip

The main object of the present paper is the Christoffel function
\begin{gather}
    \label{eq: christoffel function def}
    m_r(z) = m_r(\sigma, z)=\inf\left\{\|f\|_{L^2(\mathbb{R},\sigma)}\colon f\in \PW_{[0,r]},\ f(z)= 1\right\}, \qquad r\ge 0, \quad z\in\Cm.
\end{gather}
Lemma 8.4 in \cite{Denisov2006} states that for every $z\in \Cm_+ = \{w\colon \Im w > 0\}$ we have
\begin{gather*}
    \inf_{f\in \mathcal{S}}\left\|f - \frac{2\Im z}{x-z}\right\|_{L^2(\R, \sigma)} 
    =
    \dist\left(\frac{2\Im z}{x-z},\mathcal{S}\cap L^2(\R, \sigma)\right)_{L^2(\R, \sigma)}
    =\lim_{r\to \infty}m_r(z).
\end{gather*}
In particular, \eqref{eq: szego assertion} implies the convergence $m_r(z)\to 0$ as $r\to\infty$ for all $z\in \Cm_+$.
In the present paper we establish quantitative versions of this result.
An illustrative example is provided in Theorem \ref{thm: single deep zero} below. Roughly speaking, it states that if $d\sigma(x) = e^{-h(|x|)}\,dx$, where $h$ is a positive continuous  decreasing function on $(0, +\infty)$ that satisfies $\int_{\R_+} \frac{h(x)\, dx}{1 + x^2} = \infty$ then 
\begin{gather}
\label{eq: example ineq}
    \frac{1}{C}\int_\R\min(h(|x|), B)\frac{\Im z}{|x - z|^2}\,dx \le |\log m_r(z)|\le C\int_\R\min(h(|x|), B)\frac{\Im z}{|x - z|^2}\,dx, 
\end{gather}
where $C$ is some constant and $B > 0$ is chosen such that $rh^{-1}(B) = B$. 

The results of the present paper adapt the methods of the paper \cite{Borichev2020} by Borichev, Kononova, and Sodin, where the related question of weighted polynomial approximation on the unit circle was studied. Similar ideas were used in \cite{kononova2021convergence} to study a quantitative version of the Bernstein weighted approximation problem.
\smallskip

Suppose that $\PW_{[0,r]}\subset L^2(\R, \sigma)$. This embedding is automatically continuous and $\PW_{[0,r]}$ equipped with the $L^2(\R,\sigma)$-norm is a reproducing kernel Hilbert space if and only if the $L^2(\R)$-- and $L^2(\R, \sigma)$--norms are equivalent on $\PW_{[0,r]}$, see Section \ref{sect: paley-wiener embedding}.
In this case the reproducing kernel $K_r(\cdot, z)\in \PW_{[0,r]}$ at the point $z$ satisfies
\begin{gather*}
    f(z) = \langle f,K_r(\cdot,z)\rangle_{L^2(\R, \sigma)},\qquad f\in \PW_{[0,r]}.
\end{gather*}
The Cauchy--Schwarz inequality gives
\begin{gather*}
    |f(z)|^2
    \le
    \|f\|_{L^2(\R,\sigma)}^2
    \|K_r(\cdot,z)\|_{L^2(\R,\sigma)}^2 = \|f\|_{L^2(\R,\sigma)}^2
    K_r(z,z).
\end{gather*}
Moreover, the equality holds if and only if $f$ is proportional to $K_r(\cdot, z)$.  
Consequently, we have 
\begin{gather*}
    m_r(z) =\|K_r(\cdot,z)\|^{-1}_{L^2(\R,\sigma)} = \frac{1}{\sqrt{K_r(z, z)}}
\end{gather*}
and the infimum in \eqref{eq: christoffel function def} is attained.
Thus, the rate of decay of $m_r$ also describes the rate of growth of the norm of the reproducing kernel.
\medskip

Our interest in the asymptotic behaviour of Christoffel functions is motivated by the spectral theory of one-dimensional Dirac operators, where $\sigma$ arises as a spectral measure of the operator. The Paley-Wiener space $\PW_{[0,r]}$ equipped with the $L^2$ norm generated by such measures is always a reproducing kernel Hilbert space, see Theorem 1 in \cite{Bessonov2016} and Section 3 in \cite{Makarov2023}. Christoffel functions of spectral measures with convergent logarithmic integral were studied in \cite{Denisov2006} by Denisov. The author's papers \cite{Gubkin2024Dirac}, \cite{Gubkin2024Krein} established the connection between the rapid convergence of $m_r$ and the oscillation of the potential of Dirac operator, also see \cite{Gubkin2021}, where the behaviour of Christoffel functions on the real line was considered.
In \cite{Eichinger2024} Eichinger derived asymptotics of the Christoffel functions of the Schr\"{o}dinger operator.
\medskip

\paragraph*{\textbf{Notation.}}
Throughout the paper we use the notation $A\ls B$ when $A\le cB$ holds with some constant $c > 0$. The symbol $\simeq$ will be used if both $\ls$ and $\gs$ take place. We denote by $P_z$ the Poisson kernel at $z$ in the upper half-plane $\Cm_+$, 
\begin{gather*}
 P_z(t) =\frac{\Im z}{\pi|z - t|^2}.
\end{gather*}
Finally, given a real-valued function $f$ and a number $B$ we write $f_B = \min(f, B)$. For instance, with this notation, \eqref{eq: example ineq} rewrites as
\begin{gather*}
    |\log m_r(z)|\simeq\int_\R h_B(|x|) P_z(x)\,dx. 
\end{gather*}
Throughout the rest of the paper we assume that $\sigma$ is a measure with the divergent logarithmic integral \eqref{eq: szego assertion} and $r$ is sufficiently large so that $m_r(z) \le 1$ and $|\log m_r(z)| = -\log m_r(z)$.
\medskip

\paragraph{\textbf{Acknowledgements.}} I am grateful to Aleksei Kulikov for drawing my attention to the function used in the construction in Section \ref{section: comb domains}.

\section{Upper bounds}
Amrein--Berthier theorem, see Theorem II in the paper \cite{Nazarov1993} by Nazarov, states that there exists a constant $A > 0$ such that
\begin{gather}
\label{eq: Nazarov theorem}
    \|f\|_{L^2(\R)}^2\le Ae^{A\mu (E)\mu (\Sigma)}\left(\int_{\R\setminus E}|f|^2 + \int_{\R\setminus \Sigma}|\hat f|^2\right)
\end{gather}
holds for any sets $E, \Sigma\subset \R$ of finite measure and $f\in L^2(\R)$. Here and below $\mu$ denotes the Lebesgue measure on the real line.
In particular, when $f\in \PW_{[0,r]}$ this gives the Remez-type inequality
\begin{gather}
\label{eq: remez for PW}
    \|f\|_{L^2(\R)}^2\le Ae^{Ar\mu (E)}\int_{\R\setminus E}|f|^2\, dx.
\end{gather}
Assume that $\sigma$ is such that $\sigma' > 0$ a.\,e.\,on $\R$  and for some $a > 0$ the set $E_a = \{x\colon \sigma'(x) < a \}$ has finite Lebesgue measure.
Notice that in this case we have 
\begin{gather*}
    \lim_{a\to 0} \mu(E_a) = \mu\big(\{x\colon \sigma'(x) = 0\}\big) = 0. 
\end{gather*}
For small $a > 0$ and $f\in \PW_{[0,r]}$ inequality \eqref{eq: remez for PW} gives
\begin{gather*}
    \|f\|_{L^2(\R, \sigma)}^2 = \int_\R|f|^2d\sigma\ge a \int_{\R\setminus E_a}|f|^2\,dx 
    \ge \frac{ae^{-Ar\mu(E_a)}}{A}\|f\|_{L^2(\R)}^2.
\end{gather*}
If we additionally assume $f(z) = 1$ for $z\in \Cm_+$ then $\displaystyle\|f\|_{L^2(\R)}^2\gs r^{-1}$ and 
\begin{gather*}
    m_r^2(z)\gs \frac{a}{Ar}e^{-Ar\mu(E_a)}, 
    \\
    \liminf_{r\to\infty} m_r(z)^{2/r} \ge e^{-A\mu(E_a)}.
\end{gather*}
Letting $a\to 0$ we get $\liminf_{r\to\infty} m_r(z)^{1/r} \ge 1$.
The value of $m_r$ decreases in $r$ hence we actually get
\begin{gather*}
    \lim_{r\to\infty} m_r(z)^{1/r} = 1, \qquad z\in \Cm_+.
\end{gather*}
This is a continuous version of the classical Erd\H{o}s-Tur\'{a}n regularity criterion from the theory of orthogonal polynomials, see Corollary 4.1.3 in the book \cite{bookStahlTotik}. Equivalently this rewrites as 
 \begin{gather}
     \label{eq: erdos-turan regularity}
     |\log m_r(z)| = o(r),\quad r\to\infty.
 \end{gather}
On the other hand, if $\supp\sigma\neq \R$ then $m_r$ decays at least exponentially, see Corollary \ref{corol: measure with small support} below. The following theorem provides the quantitative version of \eqref{eq: erdos-turan regularity}. 
\begin{thm}
\label{thm: upper bound}
Let $\sigma$ be a measure with divergent logarithmic integral and
let $H$ be a measurable function on the real line such that \begin{gather}
    \label{eq: lower ineq for measure}
    d\sigma \ge \frac{e^{-H}dt}{1+t^2}.
\end{gather}
Suppose that the mapping  $a\mapsto\mu\big(\{x\colon H(x) > a\}\big)$ is finite and continuous for all sufficiently large $a$, then for all $z$ with $\Im z \ge 1$ we have 
\begin{gather*}
    \bigl|\log m_r(z)\bigr| \ls \log |z| +  \int_{\mathbb{R}} H_B(t)P_z(t)\,dt,
\end{gather*}
where $B$ is chosen such that $\mu\big(\{x\colon H(x) > B\}\big) = B/r$ and $H_B(t) = \min(H(t), B)$.
\end{thm}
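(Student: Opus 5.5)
The plan is to bound $m_r(z)$ from below. We fix $f\in\PW_{[0,r]}$ with $f(z)=1$ and show that $\|f\|_{L^2(\R,\sigma)}^2\gs |z|^{-C}\exp\bigl(-C\int H_BP_z\bigr)$. The tools are the subharmonicity of $\log|\cdot|$ in $\Cm_+$ and the Remez-type inequality \eqref{eq: remez for PW}. Replacing $H$ by $\max(H,0)$ only weakens \eqref{eq: lower ineq for measure}, so we assume $H\ge 0$. The weight $(1+t^2)^{-1}$ has to be absorbed first. Put
\begin{gather*}
g(t)=f(t)\Bigl(\frac{e^{it}-1}{it}\Bigr)^2 .
\end{gather*}
Then $g\in\PW_{[0,r+2]}$ and $|g(t)|^2\ls |f(t)|^2/(1+t^2)$ on $\R$. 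For $\Im z\ge 1$ we have $|e^{iz}-1|\ge 1-e^{-1}$, hence $|g(z)|\gs |z|^{-2}$. This single step is where the $\log|z|$ term comes from.

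Since $g$ is a Paley--Wiener function with nonnegative spectrum, it lies in $H^2(\Cm_+)$. Therefore
\begin{gather*}
\log|g(z)|\le\int_\R\log|g(t)|\,P_z(t)\,dt .
\end{gather*}
Fix a constant $\lambda>0$, to be chosen in terms of the constant $A$ from \eqref{eq: Nazarov theorem}. Write $2\log|g|=\log\bigl(|g|^2e^{-\lambda H_B}\bigr)+\lambda H_B$ and apply Jensen's inequality (concavity of $\log$, with $P_z\,dt$ a probability measure). This gives
\begin{gather*}
\int_\R |g|^2e^{-\lambda H_B}P_z\,dt\ \ge\ |g(z)|^2\exp\Bigl(-\lambda\int_\R H_BP_z\,dt\Bigr)\ \gs\ |z|^{-4}\exp\Bigl(-\lambda\int_\R H_BP_z\Bigr).
\end{gather*}
It remains to bound the left-hand side by $C\|f\|^2_{L^2(\R,\sigma)}$. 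Let $E=\{H>B\}$, so that $\mu(E)=B/r$; the continuity assumption guarantees that such a $B$ exists.

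We use $P_z\le 1/\pi$, which holds for $\Im z\ge1$. Off $E$ we have $H_B=H\ge 0$ and $\lambda\ge1$, hence
\begin{gather*}
\int_{\R\setminus E}|g|^2e^{-\lambda H}P_z\ls\int_{\R\setminus E}|f|^2\frac{e^{-H}}{1+t^2}\,dt\le\|f\|^2_{L^2(\R,\sigma)} .
\end{gather*}
On $E$ the weight equals $e^{-\lambda B}$. Applying \eqref{eq: remez for PW} to $g\in\PW_{[0,r+2]}$, and using $(r+2)\mu(E)\le 3B$ for $r\ge1$, we get
\begin{gather*}
e^{-\lambda B}\int_E|g|^2\le e^{-\lambda B}\|g\|_{L^2(\R)}^2\le Ae^{(3A-\lambda)B}\int_{\R\setminus E}|g|^2\,dt .
\end{gather*}
On $\R\setminus E$ we have $e^{-H}\ge e^{-B}$, so
\begin{gather*}
\int_{\R\setminus E}|g|^2\ls e^{B}\int_{\R\setminus E}|f|^2\frac{e^{-H}}{1+t^2}\le e^B\|f\|_{L^2(\R,\sigma)}^2 .
\end{gather*}
Choosing $\lambda=3A+1$ makes the total exponential factor $e^{(3A+1-\lambda)B}=1$, so the $E$-part is also $\ls\|f\|^2_{L^2(\R,\sigma)}$. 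Combining everything gives $\|f\|^2_{L^2(\R,\sigma)}\gs|z|^{-4}\exp\bigl(-\lambda\int H_BP_z\bigr)$. Taking the infimum over $f$ yields $|\log m_r(z)|\le 2\log|z|+\tfrac{\lambda}{2}\int H_BP_z+O(1)$. The constant $O(1)$ is absorbed because $\log|z|\ge0$ is not needed for that: the standing assumption $m_r\le1$, together with divergence of the logarithmic integral, makes $\int H_BP_z$ bounded below for large $r$.

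The main obstacle is the balancing on the bad set $E$. The Remez inequality costs a factor $e^{Ar\mu(E)}=e^{AB}$, and the weight $e^{-H}\ge e^{-B}$ on the good set costs another $e^{B}$. The naive choice $\lambda=1$ would therefore leave an uncontrolled additive $B$ in the final bound. The remedy is the freedom in Jensen's inequality to inflate $H_B$ by the constant $\lambda$, which costs only a constant factor in front of $\int H_BP_z$. A secondary technical point is that the auxiliary factor $\bigl((e^{it}-1)/(it)\bigr)^2$ is needed to pass from the weight $e^{-H}/(1+t^2)$ to something Remez-compatible. That factor must not spoil the value at $z$ by more than a power of $|z|$, which is exactly why $\Im z\ge 1$ is assumed.
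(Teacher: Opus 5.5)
Your proof is correct, and it takes a genuinely different and more economical route than the paper.

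\textbf{The paper's route.} The paper works with $f$ itself and proceeds in three steps.
\begin{enumerate}
\item It splits $\int\log|f|^2P_z$ over $\{H>B\}$ and $\{H\le B\}$ and applies Jensen to each piece separately.
\item It handles the weight $(1+t^2)^{-1}$ through Harnack's inequality $P_z\le C(z)P_i$, which is the source of its $\log|z|$ term.
\item It controls $\int|f|^2P_z$ by applying the full Amrein--Berthier inequality \eqref{eq: Nazarov theorem} to $f/(x-z)$. That function's Fourier transform is not compactly supported, so an exponentially small tail has to be absorbed using $B=o(r)$.
\end{enumerate}
The resulting loss $e^{(2A+1)B}$ is multiplied only by $\int_{\{H>B\}}P_z$. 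It is therefore paid for by $\int_{\{H>B\}}H_BP_z=B\int_{\{H>B\}}P_z$.

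\textbf{Your route.} You multiply by $\bigl((e^{it}-1)/(it)\bigr)^2$, which absorbs $(1+t^2)^{-1}$ while keeping you inside $\PW_{[0,r+2]}$. As a result you need only the one-sided inequality \eqref{eq: remez for PW} and the trivial bound $P_z\le 1/\pi$. Harnack, the tail estimate and $B=o(r)$ are all unnecessary, and the $\log|z|$ term comes from $|g(z)|\gs|z|^{-2}$. A single Jensen step with the inflated weight $e^{-\lambda H_B}$, $\lambda=3A+1$, then exactly cancels the Remez cost $e^{3AB}$ together with the factor $e^{B}$ coming from $e^{-H}\ge e^{-B}$ off $E$.

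\textbf{What you give up.} You lose the finer inequality the paper has just before its last step: coefficient one in front of $\int_{\{H\le B\}}HP_z$, with no sign condition on $H$. Relatedly, your reduction to $\max(H,0)$ preserves the hypothesis but enlarges $H_B$ in the conclusion when $H$ has a negative part. Both drawbacks disappear if you inflate only on $E$, using the weight $e^{-H}$ off $E$ and $e^{-\lambda B}$ on $E$; the rest of your argument is unchanged.

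\textbf{A minor point.} Your sentence on absorbing the additive constant is muddled, since $m_r\le 1$ plays no role there. The relevant fact is the following. The divergent logarithmic integral together with $\sigma'\ge e^{-H}/(1+t^2)$ forces $\int HP_i=\infty$. Hence $\int H_BP_z\to\infty$ as $r\to\infty$, uniformly for $|z|\le e$ with $\Im z\ge1$. For $|z|\ge e$ one simply has $\log|z|\ge1$. The paper drops this constant silently as well.
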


\begin{proof}
Fix $z \in\Cm_+$ and consider $f\in \PW_{[0,r]}$ with $f(z) = 1$. 
We have
\begin{gather*}
    0 = \log |f(z)|^2 \le \int_\R\log |f|^2 P_z(t)\,dt.
\end{gather*}
Split the latter integral into three terms as follows:
\begin{align}
\label{eq: poisson in i}
\nonumber
0 \le \int_\R\log |f|^2 P_z(t)\,dt
   &= \int_{\{H>B\}} \log|f|^2 P_z(t)\,dt
   \\
   &+ \int_{\{H\le B\}} \log\bigl(|f|^2 e^{-H}\bigr)P_z(t)\,dt
   + \int_{\{H\le B\}} {H}P_z(t)\,dt.  
\end{align}
Denote
\begin{gather*}
    \lambda = \lambda_H(B, z) =\int_{\{H> B\}}P_z(t)\,dt, \qquad \lambda' = \lambda'_H(B, z) =\int_{\{H\le B\}}P_z(t)\,dt.
\end{gather*}
Both $\lambda$ and $\lambda'$ are positive and $\lambda' + \lambda = 1$.
For the second integral in \eqref{eq: poisson in i} we apply Jensen's inequality. It gives
\begin{align*}
    \int_{\{H\le B\}} \log\bigl(|f|^2 e^{-H}\bigr)P_z(t)\,dt &=\lambda' \int_{\{H\le B\}} \log\bigl(|f|^2 e^{-H}\bigr)\frac{P_z(t)\,dt}{\lambda'}
    \\
   &\le \lambda' \log\!\left(
      \int_{\{H\le B\}} |f|^2 e^{-H}\,\frac{P_z(t)\,dt}{\lambda'}
   \right)
   \\
&\le\lambda'\log\frac{1}{\lambda'} + \lambda'\log
      \int_{\R} |f|^2 e^{-H}P_z(t)\,dt.
\end{align*}
Harnack's inequality gives 
\begin{gather}
    \label{eq: Harnack for z and i}
    P_z(t) \le \frac{1 + \rho(z, i)}{1 - \rho(z, i)}P_i(t) =\frac{|z + i| + |z - i|}{|z + i| - |z - i|}P_i(t)\le \frac{(|z| + 1)^2}{\Im z}P_i(t).
\end{gather}
Here $\rho(w, z) = \left|\frac{z - w}{z - \ol{w}}\right|$ is the pseudohyperbolic distance in $\Cm_+$. Denote $C(z) = \frac{(|z| + 1)^2}{\Im z}$. We also have $\lambda'\log \frac{1}{\lambda'}\le 1/e$  hence
\begin{align*}
    \int_{\{H\le B\}} \log\bigl(|f|^2 e^{-H}\bigr)P_z(t)\,dt 
    &\le
    \frac{1}{e} +\lambda'\log C(z) + \lambda'  \log
      \int_{\R} |f|^2 e^{-H}P_i(t)\,dt
      \\
      &\le \frac{1}{e} +\lambda'\log C(z) + \lambda'  \log
      \int_{\R} |f|^2\,d\sigma.
\end{align*}
Similarly, for the first integral in \eqref{eq: poisson in i} we get
\begin{align*}
\int_{\{H>B\}} \log|f^2|P_z(t)\,dt
\le
    \frac{1}{e} + \lambda\log\left(\int_{\mathbb{R}} |f|^{2}P_z(t)\,dt\right).
\end{align*}
Let $g=\dfrac{f}{x-z}$. The Fourier transform of $g$ is given by
\begin{gather*}
\widehat{g}(\xi)
=
\begin{cases}
\displaystyle i e^{-iz\xi}\int_0^r e^{izt}\widehat{f}(t)\,dt,
& \xi<0,\\[3mm]
\displaystyle i e^{-iz\xi}\int_\xi^r e^{izt}\widehat{f}(t)\,dt,
& 0\le \xi\le r,\\[3mm]
0,
& \xi>r.
\end{cases}
\end{gather*}
It decays exponentially as $\xi\to-\infty$ hence
\begin{gather}
\label{eq: l2 norm decay}
\|\widehat{g}\|_{L^2(\mathbb{R}\setminus[-r,r])}
=
e^{-r\Im z}\|\widehat{g}\|_{L^2(\mathbb{R}\setminus[0,r])}
\le
e^{-r\Im z}\|g\|_{L^2(\mathbb{R})}.
\end{gather}
Now let $E_B=\{x:H(x)\geq B\}$,\, $\Sigma=[-r,r]$ and recall that $B$ satisfies 
\begin{gather*}
    r\mu(E_B) = B.
\end{gather*}
Amrein-Berthier theorem \eqref{eq: Nazarov theorem} for $g$ gives
\begin{align}
\nonumber
\int_{\mathbb{R}} |f|^{2}P_z(t)\,dt=\frac{\Im z}{\pi}\int_{\mathbb{R}}|g|^2\,dt
&\le
\frac{\Im z}{\pi}A e^{2A\mu(E_B)r}
\left(
\int_{\{H<B\}}|g|^2\,dt
+
\int_{\R\setminus\Sigma}|\widehat{g}|^2\,d\xi
\right)
\\
\label{eq: nazarov for g}
 &= Ae^{2AB}\frac{\Im z}{\pi}\left(\int_{\{H<B\}}|g|^2\,dt + \int_{\R\setminus\Sigma}|\widehat{g}|^2\,d\xi\right).
\end{align}
For the first term we write
\begin{gather*}
\frac{\Im z}{\pi}\int_{\{H<B\}}|g|^2\,dt
=
\int_{\{H<B\}}|f|^2P_z(t)\,dt
\le 
e^B\int_{\R}|f|^2P_z(t)e^{-H}\,dt.
\end{gather*}
Harnack's inequality \eqref{eq: Harnack for z and i} and assertion \eqref{eq: lower ineq for measure} imply
\begin{align*}
\frac{\Im z}{\pi}\int_{\{H<B\}}|g|^2\,dt
\le 
e^BC(z) \int_{\R}|f|^2P_i(t)e^{-H}\,dt 
\le 
e^BC(z)\int_{\mathbb{R}}|f|^2\,d\sigma.
\end{align*}
This, \eqref{eq: l2 norm decay} and \eqref{eq: nazarov for g} give
\begin{gather*}
    \int_{\mathbb{R}} |f|^{2}P_z(t)\,dt
    \le 
    Ae^{2AB}\left(e^BC(z) \int_{\mathbb{R}}|f|^2\,d\sigma + e^{-2r\Im z}\int_{\mathbb{R}} |f|^{2}P_z(t)\,dt\right).
\end{gather*}
Since $B = o(r)$ as $r\to\infty$, the second term can be absorbed into the left-hand side and
\begin{gather*}
    \int_{\mathbb{R}} |f|^{2}P_z(t)\,dt\le 2Ae^{2AB}e^BC(z) \int_{\mathbb{R}}|f|^2\,d\sigma.
\end{gather*}
Substituting the obtained estimates into \eqref{eq: poisson in i} we get
\begin{align*}
    0&\le \left(\frac{1}{e} + \lambda \log\Big(2Ae^{2AB}e^BC(z) \int_{\mathbb{R}}|f|^2\,d\sigma\Big)\right)
    \\
    &+
    \left(\frac{1}{e} +\lambda'\log C(z) + \lambda'  \log
      \int_{\R} |f|^2\,d\sigma\right)
      +
      \int_{\{H\le B\}} {H}P_z(t)\,dt.
\end{align*}
We notice that $1 + \log|z|\gs\log C(z)$ hence
\begin{gather*}
    -\log
      \int_{\R} |f|^2\,d\sigma\ls 1 + \log |z| + B\lambda_H(B,z) + \int_{\{H\le B\}} {H}P_z(t)\,dt = \log |z| + \int_{\R} {H_B}P_z(t)\,dt.
\end{gather*}
Recall definition \eqref{eq: christoffel function def} of the Christoffel function.
The latter inequality holds for all $f\in \PW_{[0,r]}$ with $f(z) = 1$ therefore
\begin{gather*}
   -\log m_r(z) = \sup\left\{-\log\|f\|_{L^2(\mathbb{R},\sigma)}:f\in \PW_{[0,r]},\ f(z)= 1\right\}  \ls \log |z| + \int_{\R} {H_B}P_z(t)\,dt.
\end{gather*}
The proof is concluded.
\end{proof}

\section{Lower bounds}

\subsection{Comb domains and Martin functions}\label{section: comb domains}
Let $E\subset \R$ be a closed set such that $\R\setminus E$ is a union of $k > 0$ disjoint open bounded intervals. There exist real $u_1 < u_2 < \ldots < u_k$ and positive numbers $h_1, \ldots, h_k$ such that  $E = \Theta^{-1}(\R)$, where $\Theta$ is the conformal mapping from $\Cm_+$ to the domain
\begin{gather*}
\Omega
=
\mathbb C_+ \setminus \bigcup_{m=1}^k \left\{ \pi u_m+iy:\ 0<y\le h_m \right\}.
\end{gather*}
The domains of this type are called comb domains, for more details see \cite{Eremenko2012} and Section 7 in \cite{sodin1992functions}. 
The function $\Theta$ has positive imaginary part hence it admits the Herglotz representation 
\begin{gather}
\label{eq: Herglotz representation}
\Theta_\Omega(z)
=
az+b+
\int_{\mathbb R}
\left(
\frac{1}{t-z}
-
\frac{t}{1+t^2}
\right)
\,d\rho(t),
\qquad
a\ge0,\quad b\in\mathbb R.
\end{gather}
The Stieltjes inversion formula (see Theorem 2.2 in \cite{Gesztesy2000}) shows that the measure $\rho$ has compact support and consequently we have
\begin{gather*}
    \Theta(z) = az + b + O(1/|z|), \qquad |z|\to\infty.
\end{gather*}
In particular, the parameter $a$ in our situation is strictly positive. 
The function $M(z) = \Im \Theta(z)$ is called the symmetric Martin function of $E$. The Schwarz reflection principle shows that its symmetric extension $M(\ol{z}) = M(z)$ is harmonic in $\Cm\setminus E$.
\medskip

We will call $E$ comb-regular if all $u_m$ can be chosen to be integers. In this case for every integer $n \ge 0$ the function 
\begin{gather}
\label{eq: function A(z)}
    A(w) = \cos(n\Theta(w))
\end{gather}
attains real values on $\R$ hence it extends analytically to the whole complex plane. From \eqref{eq: Herglotz representation} it follows that $A$ is an entire function of exponential type $na$.
For every $x\in \R$ we have
\begin{gather*}
    |A(x)|\le e^{n\Im\Theta(x)} \le e^{nh},
\end{gather*}
where 
\begin{gather}
    \label{eq: h of domain}
     h = h_{\Omega} = \max_{1\le m\le k} h_m.
\end{gather}
Moreover, the inequality $|A(x)|\le 1$ holds when $x\in E$.
\begin{thm}\label{thm: Martin function}
Assume that the non-negative function $H$ satisfies 
\begin{gather*}
    \int_\R\frac{e^Hd\sigma(x)}{x^2 + 1} < \infty.
\end{gather*}
    Let $E$ be a comb-regular set such that $\R\setminus E\subset \{x\colon H(x) >  B\}$ and let $M$ be the corresponding Martin function. Define the numbers $a$ and $h$ by \eqref{eq: Herglotz representation} and \eqref{eq: h of domain}. If $4(a + 1)\le r$ and $2rh/a\le B$ then for $z$ with $\Im z\ge 1$ we have 
    \begin{gather*}
        |\log m_r(z)|\gs r\left(\frac{M(z)}{a} - \Im z\right) - \log |z|.
    \end{gather*}
\end{thm}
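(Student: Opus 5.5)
We need a lower bound on $|\log m_r(z)|$, i.e.\ an upper bound on $m_r(z)$. So the plan is to exhibit one test function $f\in\PW_{[0,r]}$ with $f(z)=1$ and small $L^2(\R,\sigma)$-norm, built from $A(w)=\cos(n\Theta(w))$ of \eqref{eq: function A(z)}. Choose the integer $n$ with $na\approx r/2$; since $4(a+1)\le r$ we can take $n\ge1$ with $r/4\le na\le r/2$. Then $A$ has exponential type $na\le r/2$. Its spectrum is in $[-na,na]$, so $F(w)=e^{i n a w}A(w)$ has spectrum in $[0,2na]\subset[0,r]$. To make $F$ square integrable with respect to $\sigma$ and also produce decay, divide by a polynomial factor, say
\[
f(w)=\frac{e^{inaw}A(w)\,q(w)}{e^{inaz}A(z)\,q(z)},\qquad q(w)=\Big(\frac{\sin(\varepsilon (w-w_0))}{w-w_0}\Big)^2 \text{ or } \frac{1}{(w-\bar z)^{2}}\cdot(\text{entire fix}),
\]
with a small exponential-type cost absorbed in the remaining $r/2$. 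Concretely, I would use $q(w)=\bigl(\tfrac{\sin(w-x_0)}{w-x_0}\bigr)^2$ suitably shifted, of type $2\le r/2$ (this is where $4(a+1)\le r$ is used), giving $|q(x)|\lesssim (1+x^2)^{-1}$ on $\R$ and $|q(z)|\gtrsim e^{2\Im z}/|z|^2$-type lower bound at $z$, which accounts for the $\log|z|$ loss.

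Next estimate the numerator norm: split $\R=E\cup(\R\setminus E)$. On $E$, $|A|\le1$ and $|e^{inax}|=1$, so $\int_E|Aq|^2d\sigma\lesssim\int\frac{d\sigma}{1+x^2}\le\int\frac{e^Hd\sigma}{1+x^2}<\infty$. On $\R\setminus E$, $|A(x)|\le e^{nh}$ while $H>B$, so $\int_{\R\setminus E}|Aq|^2d\sigma\le e^{2nh-B}\int\frac{e^Hd\sigma}{1+x^2}$; since $2nh\le rh/a\le B/2$, this is bounded. Hence $\|e^{inax}Aq\|_{L^2(\sigma)}=O(1)$ with constant depending only on $\sigma,H$.

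Then bound the denominator from below: $|e^{inaz}|=e^{-na\Im z}$, and $|A(z)|=|\cos(n\Theta(z))|\ge \tfrac12 e^{nM(z)}-\tfrac12\ge \tfrac14e^{nM(z)}$ once $nM(z)\ge 1$ (we may assume this, otherwise the claimed bound is trivial since $M(z)\ge a\Im z$ up to $O(1)$ from the Herglotz representation; indeed $M(z)-a\Im z=\int P\,d\rho\ge0$). So $\log m_r(z)\le -n(M(z)-a\Im z)+O(\log|z|)+O(\Im z)$, and with $na\simeq r$ this gives $|\log m_r(z)|\gtrsim r(M(z)/a-\Im z)-\log|z|$. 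The main obstacle is the auxiliary factor $q$: its contribution at $z$ must cost only $O(\log|z|)$, not $O(\Im z)$, which would not be absorbed. I would handle it by using $e^{i c w}$ modulation: take $q(w)=e^{iw}\frac{\sin w}{w}\cdot$ (similar), whose modulus at $z$ is $\asymp |e^{2iz}-1|/|z|\gtrsim 1/|z|$ for $\Im z\ge1$, with spectrum in $[0,2]$; squaring (or a product of two such) gives $L^2$-decay $(1+x^2)^{-1}$ and $|q(z)|\gtrsim|z|^{-2}$, so only a $\log|z|$ loss, as required.
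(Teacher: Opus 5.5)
Your argument is essentially the paper's. The test function $e^{i(na+1)w}\cos(n\Theta(w))\frac{\sin w}{w}$ and the split of $\int|F|^2\,d\sigma$ into $E$ (where $|A|\le 1$) and $\R\setminus E$ (where $|A|\le e^{nh}$ and $e^{-H}\le e^{-B}$) are the same. So is the lower bound $|\cos(n\Theta(z))|\ge\sinh(nM(z))$, with $e^{\Im z}$ from $\sin z$ cancelled by the extra modulation $e^{iz}$. Also, $nM(z)\ge 1$ needs no case distinction: $M(z)\ge a\Im z$ gives $nM(z)\ge na\ge r/4\ge a+1$.

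The step that fails as written is the spectral bookkeeping.
\begin{itemize}
\item If $na$ is allowed to reach $r/2$, then $e^{inaw}A(w)$ already fills $[0,r]$. Multiplying by $q$ (spectrum in $[0,2]$, or $[0,4]$ if squared) pushes $f$ out of $\PW_{[0,r]}$; there is no ``remaining $r/2$''.
\item You must choose $n$ with $2na+(\text{spectral width of } q)\le r$ while keeping $na\gtrsim r$. The paper takes $n=[(r/2-1)/a]$ with the single factor $e^{iw}\sin w/w$ of width $2$. Then $n\ge (r/2-1-a)/a\ge r/(4a)$ exactly because $4(a+1)\le r$. This, not ``$2\le r/2$'', is where the hypothesis is used.
\item Squaring $q$ is unnecessary. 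One factor already gives $|q(x)|^2\lesssim (1+x^2)^{-1}$, which is all your $\sigma$-integral needs.
\item Squaring is also harmful unless you halve the frequency. With width $4$, $r=4(a+1)$ and $a<1$, the condition $2na+4\le r$ forces $n\le 2$, while $n\ge r/(4a)=(a+1)/a>2$. So no admissible $n$ exists.
\end{itemize}
With $n=[(r/2-1)/a]$ and a single modulated sinc factor, your proof goes through.
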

\begin{proof}
    Fix $\alpha = 1$, let $\rho = r/2 - \alpha$ and let $n = [\rho /a]$ be the largest integer not greater than $\rho/a$. We have 
    $n = [\rho/a]\ge \frac{\rho - a}{a} = \frac{r/2 - a - 1}{a}\ge \frac{r}{4a}$. Define $A(w) = \cos(n \Theta(w))$ and put
    \begin{gather*}
        F(w) = A(w) \frac{\sin(\alpha w)}{\alpha w} e^{i(na + \alpha) w
        }.
    \end{gather*}
The Paley-Wiener theorem implies $F\in \PW_{[0,2r']}$ with $r' = na + \alpha\le r/2$ hence
\begin{gather}
    \label{eq: Martin mr estimate}
    m_r^2(z)\le \|F\|^2_{L^2(\R,\sigma)}/ |F(z)|^2.
\end{gather}
We have 
\begin{gather*}
    \int_{\mathbb R}|F|^2\,d\sigma
 =
 \int_{\mathbb R}
 |F|^2e^{-H}(1 + x^2)\frac{e^{H}\,d\sigma(x)}{1 + x^2}
 \ls \sup_{x\in \R} \left(|F|^2e^{-H}(1 + x^2)\right).
\end{gather*}
For every $x\in \R$ we have $\left|\frac{\sin(\alpha x)}{\alpha x}e^{ir'x}\right|\ls \frac{1}{|x| + 1}$ hence $(x^2 + 1)|F(x)|^2\le |A(x)|^2$ holds. This means 
\begin{gather*}
    \begin{cases}
        (x^2 + 1)|F(x)|^2\ls 1, & x\in E,
        \\
        (x^2 + 1)|F(x)|^2\ls e^{2n h}, &x\in \R\setminus E.
    \end{cases}
\end{gather*}
Recall that $H\ge B$ on $\R\setminus E$ and $H\ge 0$ on the whole real line. We have $2nh\le 2rh/a\le B$ hence 
\begin{gather*}
    \int_{\mathbb R}|F|^2\,d\sigma\ls \max (1, e^{2n h - B})\le 1.
\end{gather*}
Further we write
\begin{gather*}
    |F(z)| = \frac{|\sin(\alpha z)|}{|\alpha z|}|A(z)|e^{-r'\Im z} 
    \gs
    \frac{1}{|z|}\left(e^{\alpha\Im z} - e^{-\alpha\Im z}\right)\left(e^{n M(z)} - e^{-n M(z)}\right)e^{-r'\Im z}.
\end{gather*}
We have $nM(z)\ge na\Im z \ge r/4\gs 1$ hence
\begin{gather*}
    |F(z)|\gs \frac{1}{|z|}e^{n (M(z) - a\Im z)}. 
\end{gather*}
Substitute the obtained bounds into \eqref{eq: Martin mr estimate}. We get
\begin{gather*}
    m_r^2(z) \ls |z|^2 e^{-2n (M(z) - a\Im z)}.
\end{gather*}
To conclude the proof we take the logarithm and use $n\ge \frac{r}{4a}$:
\begin{gather*}
    |\log m_r^2(z)| \gs n (M(z) - a\Im z) - \log |z|\gs r\left(\frac{M(z)}{a} - \Im z\right) - \log |z|.
\end{gather*}

\end{proof}

\begin{corol}\label{cor: Martin single interval}
Let $L =  (c-\ell, c+ \ell)$ be an interval such that $\ell \le 1$ and $H(x) > B$ for every $x\in L$. If $\Im z\ge 1$,  $r\ge 8$ and $r\le B/(2\ell)$ then 
\begin{gather*}
    |\log m_r(z)|\gs \frac{r\ell^2\Im z}{|z - c|^2} -\log|z|.
\end{gather*}
\end{corol}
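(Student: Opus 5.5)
The plan is to apply Theorem \ref{thm: Martin function} to the simplest comb domain, one with a single slit, and then estimate the Martin function explicitly. We assume the standing hypothesis $\int_\R e^H\,d\sigma/(1+x^2)<\infty$ and $H\ge 0$ from that theorem. Put $E=\R\setminus L$ and
\begin{gather*}
    \Theta(w) = \sqrt{(w-c)^2-\ell^2},
\end{gather*}
with the branch chosen so that $\sqrt{\zeta^2}\sim \zeta$ in $\Cm_+$.

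\textbf{Step 1: $\Theta$ is the right conformal map.} The map $\zeta\mapsto\sqrt{\zeta^2-\ell^2}$ sends $\Cm_+$ conformally onto $\Cm_+\setminus\{iy\colon 0<y\le \ell\}$. Its inverse is $\eta\mapsto\sqrt{\eta^2+\ell^2}$. On the real line, $|x-c|\ge\ell$ is sent to $\R$, while $|x-c|<\ell$ is sent onto the slit $i\sqrt{\ell^2-(x-c)^2}$.

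Hence $\Omega$ is a comb domain with a single tooth, based at $\pi u_1$ with $u_1=0$, which is an integer. So $E$ is comb-regular and $\Theta^{-1}(\R)=E$. Also $\R\setminus E=L\subset\{H>B\}$, as the theorem requires.

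Expanding at infinity gives $\Theta(w)=w-c+O(1/|w|)$. Therefore $a=1$, and $h=\ell$ is the height of the slit.

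\textbf{Step 2: check the parameter conditions.} We need $4(a+1)=8\le r$ and $2rh/a=2r\ell\le B$. Both are exactly the hypotheses of the corollary. Theorem \ref{thm: Martin function} then yields
\begin{gather*}
    |\log m_r(z)|\gs r\bigl(M(z)-\Im z\bigr)-\log|z|, \qquad M(z)=\Im\sqrt{(z-c)^2-\ell^2}.
\end{gather*}

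\textbf{Step 3: bound $M(z)-\Im z$ from below.} This is the only computational step. Write $\zeta=z-c$ and $s=\sqrt{\zeta^2-\ell^2}$. Then
\begin{gather*}
    s-\zeta=\frac{-\ell^2}{s+\zeta},
\end{gather*}
and taking imaginary parts,
\begin{gather*}
    M(z)-\Im z=\Im(s-\zeta)=\frac{\ell^2\,\Im(s+\zeta)}{|s+\zeta|^2}.
\end{gather*}
Since $s\in\Cm_+$, we have $\Im(s+\zeta)\ge\Im\zeta=\Im z$.

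For the denominator, $|\zeta|\ge\Im z\ge1\ge\ell$. Hence $|s|\le\sqrt{|\zeta|^2+\ell^2}\le\sqrt2|\zeta|$, and so $|s+\zeta|\le(1+\sqrt2)|\zeta|$. This gives
\begin{gather*}
    M(z)-\Im z\gs \frac{\ell^2\Im z}{|z-c|^2}.
\end{gather*}
Substituting this into Step 2 gives the claim.

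The main point requiring care is Step 1: confirming the branch of the square root, and that this single-slit domain fits the paper's definition of comb-regular, namely that the slit sits at an integer multiple of $\pi$. Once these are in place, the rest is the elementary estimate of Step 3.
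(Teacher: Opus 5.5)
Your proof is correct and follows the paper's argument. You use the single-slit map $\Theta(z)=\sqrt{(z-c)^2-\ell^2}$ with $a=1$, $h=\ell$, $u_1=0$, apply Theorem \ref{thm: Martin function}, and then bound $M(z)-\Im z$ from below. The only cosmetic difference is in that last estimate: you use the identity $s-\zeta=-\ell^2/(s+\zeta)$ together with $\Im s>0$ and $|s|\le\sqrt2|\zeta|$, whereas the paper uses the explicit formula for $(\Im\sqrt{P+iQ})^2$ to get $M(z)^2-(\Im z)^2\ge \ell^2(\Im z)^2/|z-c|^2$. Both routes give the same bound $\ell^2\Im z/|z-c|^2$.
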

\begin{proof}
The function $\Theta(z) = \sqrt{(z - c)^2 - \ell^2}$ is the conformal mapping from $\Cm_+$ to the comb domain $\Omega = \Cm_+\setminus\{iy,\,0 <y\le \ell \}$ and $\Theta^{-1}(\R) = \R\setminus L$. It follows that $E = \R\setminus L$ is a comb-regular set. The parameters $a$ and $h$ from \eqref{eq: Herglotz representation} and \eqref{eq: h of domain} satisfy $a = 1$ and $h = \ell$. By Theorem \ref{thm: Martin function},
\begin{gather*}
    |\log m_r(z)|\gs r(M(z) - \Im z) - \log|z|, \qquad 2r\le B/\ell,
\end{gather*}
where 
\begin{gather*}
    M(z) = \Im \Theta(z) = \Im \sqrt{(z - c)^2 - \ell^2}.
\end{gather*}
Denote $w = z -c = x + iy$ then we have
\begin{gather*}
    (z - c)^2 - \ell^2 = w^2 - \ell^2 = x^2 - y^2 - \ell^2 + 2ixy.
\end{gather*}
Explicit formula
\begin{gather*}
    \left(\Im\sqrt{P + iQ}\right)^2 = \frac{-P + \sqrt{P^2 + Q^2}}{2} = \frac{-P + |P + iQ|}{2}
\end{gather*}
shows
\begin{gather*}
    M(z)^2 = \left(\Im\sqrt{(z - c)^2 - \ell^2}\right)^2 = \frac{- \big(x^2 - y^2 - \ell^2\big) + |w^2 - \ell^2|}{2},
    \\
    M(z)^2 - y^2 = \frac{|w^2 - \ell^2| - |w|^2 + \ell^2}{2} = \frac{2\ell^2 y^2}{|w^2 - \ell^2| + |w|^2 - \ell^2}\ge \frac{\ell^2 y^2}{|w|^2}.
\end{gather*}
Transforming we obtain
\begin{gather*}
    M(z) - y\ge y\left(\sqrt{1 + \ell^2/|w|^2} - 1\right).
\end{gather*}
Since $\ell\le 1$ and $|w|^2\ge (\Im z)^2\gs 1$ we have $\ell^2/|w|^2\le 1$ hence
\begin{gather*}
    M(z) - y\gs \frac{y\ell^2}{|w|^2} = \frac{y\ell^2}{|z - c|^2}.
\end{gather*}
This completes the proof.
\end{proof}    
\begin{corol}\label{corol: measure with small support}
If the support of $\sigma$ does not fill the whole real line then  $|\log m_r(z)|\gs r$ holds for all large $r$. 
\end{corol}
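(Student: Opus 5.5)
Since $\supp\sigma$ is closed and differs from $\R$, there is an open interval $L=(c-\ell,c+\ell)$ with $\sigma(L)=0$. We may shrink it so that $\ell\le 1$. The plan is to apply Corollary \ref{cor: Martin single interval} with a function $H$ that is $0$ off $L$ and arbitrarily large on $L$. Fix $z$ with $\Im z\ge 1$ (the claim is for fixed $z$ and large $r$).

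Given $r\ge 8$, set $B=2r\ell$ and let $H=B+1$ on $L$ and $H=0$ elsewhere. Because $\sigma(L)=0$,
\begin{gather*}
\int_\R\frac{e^{H}\,d\sigma(x)}{1+x^2}=\int_{\R\setminus L}\frac{d\sigma(x)}{1+x^2}<\infty .
\end{gather*}
So $H$ is admissible in Theorem \ref{thm: Martin function}. The bound obtained there is uniform: it depends only on the integral above, which does not depend on $r$, together with the parameters $a=1$ and $h=\ell$. We have $H>B$ on $L$ and $r\le B/(2\ell)$, so Corollary \ref{cor: Martin single interval} gives
\begin{gather*}
|\log m_r(z)|\gs \frac{r\ell^2\Im z}{|z-c|^2}-\log|z| .
\end{gather*}
For fixed $z$ the coefficient $\ell^2\Im z/|z-c|^2$ is a positive constant. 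The term $\log|z|$ is therefore absorbed once $r$ is large, and we get $|\log m_r(z)|\gs r$ for all large $r$.

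The main point to check is that the implicit constant in Theorem \ref{thm: Martin function} does not depend on $H$ or $B$. Inspecting that proof, the only $H$-dependent quantity is $\int e^H\,d\sigma/(1+x^2)$, which here equals $\int d\sigma/(1+x^2)$ regardless of $B$. The extra value of $H$ on $L$ is harmless because $\sigma$ gives $L$ no mass.

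Points $z$ with $0<\Im z<1$ are not covered by the corollaries as stated. If the statement is meant to include them, I would extend it using monotonicity properties of $m_r$ or Harnack's inequality as in \eqref{eq: Harnack for z and i}. As a fallback I would rerun the argument of Theorem \ref{thm: Martin function} directly, with the explicit function $\Theta(w)=\sqrt{(w-c)^2-\ell^2}$ and the bound $M(z)>\Im z$, which holds for every $z\in\Cm_+$.
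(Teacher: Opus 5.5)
Your argument is correct and is the derivation the paper intends. The paper states this corollary without proof, immediately after Corollary \ref{cor: Martin single interval}: take a gap $L$ of $\supp\sigma$, make $H$ large on $L$ and zero elsewhere so that $\int e^{H}\,d\sigma/(1+x^2)$ does not depend on $r$, and apply Corollary \ref{cor: Martin single interval} with $B=2r\ell$. Your check that the implicit constant is uniform in $B$ is sound, and so is your remark that the restriction $\Im z\ge 1$ can be dropped for fixed $z$ by rerunning Theorem \ref{thm: Martin function} with $M(z)>\Im z$.
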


\subsection{Approximation by outer functions}
Let $\phi$ be an unbounded continuous decreasing function on $(0, +\infty)$.
We call the function $H$ subordinated to $\phi$ if $\displaystyle\lim_{x\to\infty}{\phi(x)}\le \inf_{x\in\R}H(x)$ and 
\begin{gather*}
\phi^{-1}\circ H
\end{gather*}
is a non-negative Lipschitz function with Lipschitz constant at most one.
\smallskip

We call the unbounded continuous decreasing function $\varphi$ on
$(0,+\infty)$ \emph{regular} if it satisfies at least one of the following
two conditions:
\begin{gather}
\label{Reg1}\tag{Reg1}
\text{the function }x\mapsto x\varphi(x)\text{ does not decrease
and }
\varphi(x)\gs\log\frac1x,
\\
\label{Reg2}\tag{Reg2}
\varphi(x/2)\ls\varphi(x)
\quad\text{and}\quad
\varphi(x)\gtrsim\frac1x.
\end{gather}
These regularity assertions are taken from \cite{Borichev2020}, see Section 4 there. 
\smallskip

Consider the following class of  measures
\begin{gather*}
    \Meas = \{\sigma\colon \sigma'\in L^1(\R) + L^\infty(\R),\,\, \sigma_s(\R) < \infty\},
\end{gather*}
where $\sigma'$ and $\sigma_s$ are the density of the a.\,c.\,part of $\sigma$ and the singular part of $\sigma$ with respect to the Lebesgue measure on the real line. 
If $\sigma\in\Meas$ then there exist $\sigma_1\in L^1(\R)$, $\sigma_\infty\in L^\infty(\R)$ such that $\sigma' = \sigma_1 + \sigma_\infty$ and for $f\in L^2(\R)\cap L^\infty(\R)$ we have 
\begin{align}
    \nonumber
    \|f\|_{L^2(\R, \sigma)}^2 = \int_{\R}|f|^2\,d\sigma 
    &= \int_{\R}|f|^2\sigma_1\,dx + \int_{\R}|f|^2\sigma_\infty\,dx + \int_{\R}|f|^2\,d\sigma_s
    \\
    \label{eq: L2 sigma in terms of L2 and Linfty}
    &\le 
    \|f\|_{L^\infty(\R)}^2 \left(\|\sigma_1\|_{L^1(\R)} + \sigma_s(\R)\right) + \|f\|_{L^2(\R)}^2\|\sigma_\infty\|_{L^\infty(\R)}. 
\end{align}

\begin{thm}\label{thm: theorem lower bound}
Assume that $\sigma\in\Meas$ and
\begin{gather*}
\int_\R\frac{e^Hd\sigma(x)}{x^2 + 1} < \infty
\end{gather*}
with $H$ subordinated to a regular function $\varphi$. Then for all $z$ with $\Im z\ge 1$ and $r\gs \log |z|$ we have
\begin{gather*}
|\log m_r(z)| \gs \int_{\R}H_BP_z\,dx - \log |z|,
\end{gather*}
where $B$ solves the equation
$r\varphi^{-1}(B)=B$
when $\varphi$ satisfies condition \eqref{Reg1}, and $B=\sqrt{r}$
when $\varphi$ satisfies condition \eqref{Reg2}.
\end{thm}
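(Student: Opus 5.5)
We will construct an explicit test function $F\in\PW_{[0,r]}$ with $|F(z)|$ large and $\|F\|_{L^2(\R,\sigma)}$ bounded, and then use $m_r(z)\le \|F\|_{L^2(\R,\sigma)}/|F(z)|$. The template is the proof of Theorem \ref{thm: Martin function}, but $\cos(n\Theta)$ is replaced by an outer function built from $H_B$. Let $U$ denote the Poisson extension of $H_B$ to $\Cm_+$ and $\tilde U$ its harmonic conjugate. We want an entire function $G$ of exponential type at most $r/2$, with spectrum we can shift into $[0,r]$, such that:
\begin{itemize}
\item $\log|G(x)|\le \tfrac12 H(x) + O(1)$ on $\R$, or at least $|G|^2e^{-H}$ is bounded;
\item $\log|G(z)|\ge c\,U(z) - O(\log|z|)$.
\end{itemize}
Then we put $F(w) = G(w)\frac{\sin w}{w}e^{i\tau w}$ with a suitable shift $\tau$. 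Since $\int e^H d\sigma/(1+x^2)<\infty$, the argument from the proof of Theorem \ref{thm: Martin function} gives $\|F\|_{L^2(\sigma)}\ls 1$. Taking logarithms would then give the claim. The hypothesis $r\gs\log|z|$ is there so that the damping factor $e^{-\tau\Im z}$ and the polynomial losses are absorbed, after trading $\Im z$ against $\log|z|$ via Harnack as in \eqref{eq: Harnack for z and i}.

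To construct $G$ we follow the Borichev--Kononova--Sodin scheme. First we regularize $H_B$ using subordination: since $\varphi^{-1}\circ H$ is $1$-Lipschitz, $H_B$ is roughly constant on intervals of length comparable to $\varphi^{-1}(H_B(x))\ge \varphi^{-1}(B)$. We discretize $\R$ into intervals of length $\delta=\varphi^{-1}(B)$ (under \eqref{Reg1}; under \eqref{Reg2} we use a scale tied to $B=\sqrt r$). We then take $G = e^{\frac12(U+i\tilde U)}$ for the outer function with modulus $e^{H_B/2}$ on $\R$, which satisfies the bound on $\R$ but is not entire. We replace it by an entire function of exponential type $\ls B/\delta = r$: on each interval where $H_B$ is large we use the comb-domain construction (Corollary \ref{cor: Martin single interval} / Theorem \ref{thm: Martin function}, with $n\sim r$, $h\sim B/r$, so that $2nh\le B$). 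Equivalently, we approximate $\tfrac12 H_B$ by a logarithm of a function of the form $\cos(n\Theta)$ for a comb domain $\Omega$. Its tooth heights are $h_m\approx H_B(\pi u_m)/n$ over integer lattice points spaced at scale $\delta$; this gives a comb-regular set. The Martin function then satisfies $M(z)-a\Im z\gs \frac1n\int H_B P_z$, since for comb domains $M(z)-a\Im z$ is comparable to the Poisson integral of the tooth profile. The regularity conditions \eqref{Reg1}/\eqref{Reg2} are exactly what makes the tooth profile dominated by $H_B$ on the real line while the tooth heights stay within the budget $2rh/a\le B$. They also keep the type parameter $a$ bounded, which is where $r\varphi^{-1}(B)=B$ enters.

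An alternative we keep in reserve is to avoid combs entirely. We would multiply the outer function $e^{(U+i\tilde U)/2}$ by a mollifier to get a function of type $\ls r$, using the Lipschitz condition to control $\tilde U$'s oscillation at scale $\delta$. We would then use $\sigma\in\Meas$ and \eqref{eq: L2 sigma in terms of L2 and Linfty} to bound the $L^2(\sigma)$ norm of the error by $L^2$ and $L^\infty$ norms.

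The main obstacle is the comparison $M(z)-a\Im z\gs \frac1n\int_\R H_B P_z$ for a multi-tooth comb. The single-tooth computation in Corollary \ref{cor: Martin single interval} gives only $\ell^2\Im z/|z-c|^2$ per tooth. Teeth interact, so summing these is not automatic, and the quadratic dependence on width forces the tooth width to match the scale $\varphi^{-1}(B)$. We would try to handle this via monotonicity of Martin functions under domain inclusion, with an estimate from below by harmonic measure of the teeth.
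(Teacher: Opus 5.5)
\textbf{There is a genuine gap.} The proposal does not yet amount to a proof: the step that produces the test function is left open, and the framework around it contains a concrete error.

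With $F(w)=G(w)\frac{\sin w}{w}e^{i\tau w}$ and $\tau\approx r/2$ one has $|F(z)|\simeq |G(z)|\,|z|^{-1}e^{(1-\tau)\Im z}$. The factor $e^{-\tau\Im z}$ cannot be absorbed using $r\gs\log|z|$: since $\int_\R H_BP_z\,dx\le B=o(r)$, it would wipe out the whole gain. It must be cancelled by growth of $G$, i.e.\ you need $\log|G(z)|\ge\tau\Im z+c\int_\R H_BP_z\,dx-O(\log|z|)$, not merely $cU(z)$. The function $\cos(n\Theta)$ has this growth because $M(z)\ge a\Im z$. An outer function (bounded in $\Cm_+$) made entire of type $\tau$ and then modulated does not, so your reserve plan fails in the form you set up.

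On the comb route, the obstacle you single out is not the real one. By \eqref{eq: Herglotz representation} and Stieltjes inversion, $M(z)-a\Im z=\int_\R M(t)P_z(t)\,dt$ exactly, with $M(t)=\Im\Theta(t)$ the boundary profile on $\R$, so there is nothing to sum. What is missing is a comb-regular set with $\frac ra M\ls H$ pointwise on $\R$ and $\frac ra\int_\R MP_z\,dt\gs\int_\R H_BP_z\,dt$. Theorem \ref{thm: Martin function} as stated cannot give this, because it requires $\R\setminus E\subset\{H>B\}$ and only uses the bound $e^{n\max_m h_m}$. Moreover, a rough scaling count suggests that $a$ cannot stay bounded, contrary to your claim. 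On a stretch where $H\simeq B$ the tooth heights must be $\ls aB/r$, and $\pi$-separated teeth of that height occupy a proportion $\ls aB/r$ of the line. So they contribute $\frac ra\int M\ls aB^2/r$ per unit length against a target $\simeq B$, which forces $a\gs r/B\to\infty$.

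The idea you are missing is the one in Lemma \ref{lemma: main lemma lower bound}. Your reserve plan points that way, but with the wrong mechanism: multiplying by a mollifier does not shrink the spectrum, and $\tilde U$ plays no role.

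Let $K$ be such that $H_B*P_{ih}\le KH$, and take the outer function $F$ with $|F|=e^{H_B/(2K)}$ on $\R$. Then $T=F/(w+i)\in H^2(\Cm_+)$ already has spectrum in $[0,\infty)$, so neither an entire function of symmetric type nor a modulation is needed.

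Pass to $T_h(w)=T(w+ih)$, which multiplies the Fourier transform by $e^{-h\xi}$. Cutting the spectrum at $r$ then changes the function by $\ls e^{B/(2K)-rh}$ in $L^2(\R)$ and by $\ls h^{-1/2}e^{B/(2K)-rh/2}$ in $L^\infty(\R)$. This is harmless when $rh\gs B$ and $\log h^{-1}\ls B$. Then \eqref{eq: L2 sigma in terms of L2 and Linfty} controls the error in $L^2(\R,\sigma)$, which is where $\sigma\in\Meas$ is used.

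The decisive point is that for an outer function the shift averages the logarithm, not the modulus: $\log|F(x+ih)|=\frac1{2K}(H_B*P_{ih})(x)\le H(x)/2$, so $\|T_h\|_{L^2(\R,\sigma)}\ls1$. The inequality $H_B*P_{ih}\ls H$ is Lemma \ref{lemma: poisson kernel of subordinated function}, with $h=\varphi^{-1}(B)$ under \eqref{Reg1} and $h=1/B$ under \eqref{Reg2}. This is where subordination and regularity enter (regularity also gives $\log h^{-1}\ls B$). The choices $r\varphi^{-1}(B)=B$, resp.\ $B=\sqrt r$, say precisely $rh=B$.

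At $z$, Harnack's inequality gives $|F(z+ih)|\ge|F(z)|^{\Im z/(\Im z+h)}\ge|F(z)|^{1/2}$. The hypothesis $r\gs\log|z|$ only serves to make the truncation error at $z$, of order $e^{B/(2K)-r\Im z/2}$, smaller than $|T_h(z)|\gs|z|^{-1}$.
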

If $g$ is a positive measurable function on $\R$ such that $\int_\R\frac{|\log g(t)|\,dt}{1 + t^2} < \infty$ then, see Section 4 in \cite{Garnett}, there exists an outer function $G$ in $\Cm_+$ with nontangential boundary values that satisfy $|G(x)| = g(x)$ a.\,e.\,on $\R$. It is given by the explicit formula
\begin{gather}
    \label{eq: outer function explicit}
    G(w) = \exp\left(\frac{1}{\pi i}\int_\R \log g(t)\,\left(\frac{1}{t - w}- \frac{t}{1 + t^2}\right)\,dt \right), \qquad w\in \Cm_+.
\end{gather}
In this case we also have 
\begin{gather}
\label{eq: outer function modulus}
    \log |G(w)| = \int_\R \log g(t)\, P_w(t)\,dt.
\end{gather}
\begin{lemma}\label{lemma: outer function}
Let $g\ge 1$ be a function on $\R$ such that $\int_\R\frac{|\log g(t)|\,dt}{1 + t^2} < \infty$ and let $G$ be the corresponding outer function. Then for $z \in \Cm_+$ and $h\ge 0$ we have 
\begin{gather*}
    |G(z + ih)|\ge |G(z)|^{\frac{\Im z}{\Im z + h}}.
\end{gather*}
\end{lemma}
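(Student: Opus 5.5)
Since $g\ge 1$, the function $u(w)=\log|G(w)|=\int_\R \log g(t)\,P_w(t)\,dt$ is a non-negative harmonic function in $\Cm_+$, by \eqref{eq: outer function modulus}. The claimed inequality is equivalent to
\begin{gather*}
    (\Im z + h)\,u(z+ih)\ \ge\ \Im z\, u(z),
\end{gather*}
that is, to the statement that $y\mapsto y\,u(x+iy)$ is non-decreasing in $y>0$ for each fixed $x$. I would prove this pointwise on the level of the Poisson kernel and then integrate against the non-negative function $\log g$.

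Write $z=x+iy$. Then
\begin{gather*}
    y\,P_{x+iy}(t)=\frac{y^2}{\pi\big((x-t)^2+y^2\big)}
    =\frac{1}{\pi}\cdot\frac{1}{1+(x-t)^2/y^2}.
\end{gather*}
For every fixed $t$ this is a non-decreasing function of $y$, because $(x-t)^2/y^2$ does not increase as $y$ grows. Hence for $h\ge 0$ we have
\begin{gather*}
    (y+h)\,P_{z+ih}(t)\ge y\,P_z(t), \qquad t\in\R.
\end{gather*}
We multiply this by $\log g(t)\ge 0$ and integrate over $\R$. Both integrals are finite: $P_w(t)\ls_w (1+t^2)^{-1}$ and $\int_\R |\log g|\,(1+t^2)^{-1}\,dt<\infty$. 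This gives $(y+h)\log|G(z+ih)|\ge y\log|G(z)|$. Dividing by $y+h$ and exponentiating yields $|G(z+ih)|\ge |G(z)|^{\Im z/(\Im z+h)}$.

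There is no real obstacle here. The only points to check are that the non-negativity of $\log g$ (this is exactly where $g\ge 1$ is used) justifies integrating the kernel inequality, and that the integrals converge, which follows from the assumed integrability of $\log g$ against $(1+t^2)^{-1}\,dt$.
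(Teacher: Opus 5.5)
Your proof is correct, and it reaches the same sharp factor $\frac{\Im z}{\Im z+h}$ as the paper by a slightly different route. The paper does not work with the kernel directly. It applies Harnack's inequality to the non-negative harmonic function $U=\log|G|$, with pseudohyperbolic distance $\rho(z,z+ih)=h/(2\Im z+h)$, so that $\frac{1-\rho}{1+\rho}=\frac{\Im z}{\Im z+h}$. Your pointwise bound $(\Im z+h)P_{z+ih}(t)\ge \Im z\,P_z(t)$ is exactly the kernel-level form of Harnack's inequality for this particular pair of points: you verify it by hand and then integrate it against $\log g\ge 0$. In both arguments the hypothesis $g\ge 1$ enters in the same place, namely through $U\ge 0$. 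Your version is self-contained and gives the slightly stronger statement that $y\mapsto y\,u(x+iy)$ is non-decreasing. The paper's version is a one-line citation that applies to any pair of points and any non-negative harmonic function, not only Poisson integrals. It also reuses the Harnack estimate already used in the proof of Theorem~\ref{thm: upper bound}.
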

\begin{proof}
The function
\begin{gather*}
    U(w) = \log|G(w)| = \int_\R \log g(t) P_w(t)\,dt
\end{gather*}
is non-negative and harmonic in $\Cm_+$. Harnack's inequality, recall \eqref{eq: Harnack for z and i}, for $U$ gives
\begin{gather*}
    U(z + ih)\ge \frac{1 - \rho(z, z + ih)}{1 + \rho(z, z + ih)}U(z) = \frac{\Im z}{\Im z + h} U(z),
\end{gather*}
where $\rho(z,w) = \left|\frac{z - w}{z - \ol w}\right|$ is the pseudohyperbolic distance in $\Cm_+$.
It follows that
\begin{gather*}
    |G(z + ih)| =e^{U(z + ih)} \ge  e^{\frac{\Im z}{\Im z + h} U(z)} = |G(z)|^{\frac{\Im z}{\Im z + h}} .
\end{gather*}

\end{proof}

\begin{lemma}\label{lemma: main lemma lower bound}
Assume that $\sigma \in \Meas$ and let $H\ge 0$ be such that
\begin{gather*}
    \int_\R\frac{e^Hd\sigma(x)}{x^2 + 1} < \infty, \qquad \qquad H_B * P_{ih} \ls  H
\end{gather*}
with $B$ and $h\le 1$ that satisfy $\log h^{-1}\ls B$. 
Then for all $z$ with $\Im z \ge 1$ and $r\gs B/h$, $r\gs \log|z|$ we have 
\begin{gather*}
    |\log m_r(z)| \gs  \int_{\R}H_B(t)P_z(t)\,dt - \log |z|.
\end{gather*}
\end{lemma}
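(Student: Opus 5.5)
Since $|\log m_r(z)| = -\log m_r(z)$, a lower bound on $|\log m_r(z)|$ is an upper bound on $m_r(z)$. So the plan is to build an explicit test function $F\in\PW_{[0,r]}$ for which $\|F\|_{L^2(\R,\sigma)}\ls 1$ and $\log|F(z)|\gs \int_\R H_B P_z\,dt-\log|z|$.

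\textbf{Construction.} Let $g=e^{H_B/2}\ge 1$. Then $\log g=H_B/2\le B/2$ is bounded, so $\int_\R\frac{|\log g|}{1+t^2}\,dt<\infty$. Let $G$ be the outer function \eqref{eq: outer function explicit}. By \eqref{eq: outer function modulus},
\begin{gather*}
\log|G(w)|=\tfrac12 (H_B*P_{w}).
\end{gather*}
Set $G_h(w)=G(w+ih)$, which is bounded and analytic near $\overline{\Cm_+}$. On the real line,
\begin{gather*}
\log|G_h(x)|=\tfrac12 (H_B*P_{ih})(x)\le \tfrac{c}{2}H(x)
\end{gather*}
by the hypothesis $H_B*P_{ih}\ls H$. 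If the implied constant $c$ exceeds $1$, use $e^{H_B/(2c)}$ in place of $e^{H_B/2}$; this only changes the final constant.

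$G_h$ is not of exponential type, so I would approximate it by an entire function of exponential type $\ls B/h$. The hypothesis $|\log G_h|\le B/2$ on $\Cm_+$, analyticity in $\{\Im w>-h\}$, and the Cauchy estimates give $|G_h^{(k)}|\le e^{B/2}k!\,h^{-k}$ on the closed half-plane. Take $\Phi=\log G_h$, which is analytic with $|\Phi|\ls B$ on $\{\Im w>-h/2\}$, and truncate its Fourier transform: set $\Phi_R=\Phi*\kappa_R$ with a smooth kernel $\kappa_R$ whose Fourier transform has support in $[0,R]$. The transform of $\Phi-\Phi(i\cdot\infty)$ is supported in $[0,\infty)$ and decays like $Be^{-h\xi/2}$, so for $R\simeq B/h$ the error is $\ls Be^{-hR/2}\le 1$. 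The analytic correction is exactly what the hypothesis $\log h^{-1}\ls B$ pays for. Then $e^{\Phi_R}$ is not in $\PW$, but $\exp$ of an exponential-type function is not of exponential type. So instead I would approximate $G_h$ itself directly: expand $e^{\Phi}$ in a Taylor polynomial of degree $N\simeq B$ in $\Phi_R$, giving type $\le NR\simeq B^2/h$. This does not fit $r\gs B/h$.

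Following the earlier argument, the cleaner route is to approximate $G_h$ by the band-limited function $G_h*\kappa$. The spectrum of $G_h$ lies in $[0,\infty)$ with $|\hat G_h(\xi)|\ls e^{B}e^{-h\xi}$, and cutting at $R=c_0B/h$ leaves a tail $\le e^{B-c_0B}\le 1$. Define
\begin{gather*}
F(w)=\bigl(\text{band-limited } G_h\bigr)(w)\cdot\frac{\sin w}{w}e^{iw},
\end{gather*}
which lies in $\PW_{[0,r]}$ when $r\gs B/h$. This is the step I expect to be the main obstacle: the cut-off error must be controlled uniformly on $\R$ relative to $e^{H/2}$ (absolute error $O(1)$ suffices since $H\ge0$), and also near $z$.

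\textbf{Norm estimate.} Using the weight $\frac{e^H}{1+x^2}$ as in the proof of Theorem \ref{thm: Martin function}, together with $|F(x)|^2(1+x^2)\ls |G_h(x)|^2+1\ls e^{H(x)}$, we get $\|F\|_{L^2(\R,\sigma)}\ls 1$. The class $\Meas$ and \eqref{eq: L2 sigma in terms of L2 and Linfty} handle the error term.

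\textbf{Value at $z$.} By Lemma \ref{lemma: outer function} with $h\le 1\le \Im z$, $|G(z+ih)|\ge |G(z)|^{1/2}$, so
\begin{gather*}
\log|G_h(z)|\ge \tfrac14\int_\R H_BP_z\,dt.
\end{gather*}
The factor $\frac{\sin z}{z}e^{iz}$ costs only $\log|z|$. The approximation error is $O(1)$, which is negligible whenever $\int_\R H_BP_z\,dt$ is large; otherwise the claim is trivial. The condition $r\gs\log|z|$ absorbs the bounded losses coming from the shift. Taking logarithms gives the stated bound.
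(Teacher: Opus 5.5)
\textbf{There is a genuine gap, in the truncation step.} Your architecture is the paper's: an outer function with modulus $e^{H_B/(2M)}$, the shift $w\mapsto w+ih$ so that $H_B*P_{ih}\ls H$ controls the boundary values, the outer-function lemma giving $\log|G(z+ih)|\ge\frac12\log|G(z)|$, and a band-limited approximation with $O(1)$ error. The gap is exactly the step you flag as the main obstacle, and the justification you sketch does not work.

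Since $|G_h|\ge 1$ on $\R$, $G_h\notin L^2(\R)$. Its Fourier transform is only a tempered distribution supported on $[0,\infty)$, and it need not be a function at all. For instance, the outer function $G(w)=2+e^{iw}$ (modulus between $1$ and $3$) gives $\widehat{G_h}=2\delta_0+e^{-h}\delta_1$ up to normalization. So the bound $|\widehat{G_h}(\xi)|\ls e^{B}e^{-h\xi}$ has no meaning. Even a genuine pointwise bound of that kind would only give a sup-norm tail of order $e^{B-hR}/h$, not $e^{B-c_0B}$. The discarded detour has a similar flaw: only $\log|G_h|$ is bounded by $B/2$, while the argument of $G_h$ is a conjugate function and need not be bounded, so $|\Phi|\ls B$ fails.

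\textbf{The missing idea: make the function square-integrable before cutting the spectrum.} The paper takes $T=G/(w+i)\in H^2(\Cm_+)$. Then $T(w)=\frac{1}{\sqrt{2\pi}}\int_0^\infty\phi(\xi)e^{i\xi w}\,d\xi$ with $\|\phi\|_{L^2}\ls e^{B/(2M)}$, and $T_h$ has spectral density $\phi(\xi)e^{-h\xi}$. Truncating at $r$ gives $G_r\in\PW_{[0,r]}$ with $\|G_r-T_h\|_{L^2(\R)}\le e^{-rh}\|\phi\|$. By Cauchy--Schwarz, $|G_r(w)-T_h(w)|\ls\|\phi\|\,e^{-r(\Im w+h)/2}(\Im w+h)^{-1/2}$.
\begin{itemize}
\item On $\R$ this is $\ls e^{B/(2M)-rh/2}h^{-1/2}\ls 1$. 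This is precisely where $rh\gs B$ and $\log h^{-1}\ls B$ are used.
\item At $w=z$ it is $\ls e^{B/(2M)-r\Im z/2}$, which is negligible against $|T_h(z)|\gs|z|^{-1}$ because $r\gs\log|z|$.
\end{itemize}
The error $G_r-T_h$ is bounded and in $L^2(\R)$ but has no decay. That is why its $L^2(\sigma)$-norm is controlled through $\sigma\in\Meas$, via the $L^1+L^\infty$ splitting of $\sigma'$.

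\textbf{Your ordering can be rescued, but the proof of it is what is missing.} You cut first and then multiply by $\frac{\sin w}{w}e^{iw}$. To make this work, write $G_h=G_{h/2}*P_{ih/2}$ and use a smooth spectral cutoff. The complementary kernel, with Fourier transform $e^{-h\xi/2}\chi(\xi)$ where $\chi$ vanishes below $R/2$, has $L^1$-norm $\ls e^{-hR/4}$ once $hR\ge 1$. This gives a uniform error $\ls e^{B/(2M)-hR/4}$ on $\R$, and by the maximum principle the same bound holds at $z$. In that version the error is also multiplied by $\frac{\sin x}{x}$, so the weight $e^H/(1+x^2)$ alone bounds the norm and $\Meas$ is not needed. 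This contradicts your remark that $\Meas$ handles the error term. However, this kernel estimate, together with the spectral-support argument that justifies discarding negative frequencies, is the actual content of the step, and it is absent from your proposal.
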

\begin{proof}
Assume that the inequality
\begin{gather*}
    \int_\R H_B(t)P_{x + ih}(t)\,dt  = (H_B * P_{ih})(x) \le MH(x)
\end{gather*}
holds with some constant $M\ge 1$ for all $x\in \R$.
Let $F$ be an outer function with the boundary values $|F| = \exp(H_B / (2M))$ a.\,e.\,on $\R$.  Formula \eqref{eq: outer function modulus} shows 
\begin{gather*}
    1\le |F(w)|\le \exp(B/(2M)), \qquad w\in \Cm_+.
\end{gather*}
Lemma \ref{lemma: outer function} for $F$ gives
\begin{gather*}
    |F(z + ih)|\ge |F(z)|^{\Im z/(\Im z + h)}.
\end{gather*}
The inequality $|F(z)|\ge 1$ then implies $|F(z + ih)|\gs \sqrt{|F(z)|}$. 
Since $F$ is bounded in $\Cm_+$ we have $T(w) = \frac{F(w)}{w + i}\in H^2(\Cm_+)$ and $T_h(w) = T(w + ih)$ satisfies
\begin{gather*}
    |T_h(z)| = |T(z + ih)| = \frac{|F(z + ih)|}{|z + i + ih|}\gs \frac{\sqrt{|F(z)|}}{|z|} = \frac{1}{|z|}\exp\left(\frac{1}{4M} \int_\R H_B P_z\,dt\right).
\end{gather*}
We can also estimate
\begin{gather*}
    \log |F(x + ih)| = \frac{1}{2M}\int_\R H_B(t)P_{x + ih}(t)\,dt\le \frac{H(x)}{2},
    \\
    \int_\R |T_h(x)|^2d\sigma(x) = \int_\R\frac{|F(x + ih)|^2\, d\sigma(x)}{x^2 + (1 + h)^2} 
    \le
    \int_\R\frac{\exp\left(H(x)\right)}{x^2 + 1}\,d\sigma(x) \ls 1.
\end{gather*}
From the obtained bounds we get
\begin{gather*}
    \log\left(\|T_h / T_h(z)\|_{L^2(\R,\sigma)}\right)\ls -\frac{1}{4M} \int_\R H_B P_z\,dt + \log |z|,
\end{gather*}
however $T_h$ does not belong to $\PW_{[0,r]}$ and the desired inequality for $m_r$ does not follow immediately. Let us approximate $T_h$. We have $T\in H^2(\Cm_+)$ hence there exists $\phi\in L^2(\R_+)$ such that
\begin{gather*}
    T(w) = \frac{1}{\sqrt{2\pi}}\int_{\R_+} \phi(\xi)e^{i\xi w}\, d\xi,\qquad w\in \Cm_+,
    \\
    T_h(w) = T(w + ih) = \frac{1}{\sqrt{2\pi}}\int_{\R_+} \phi(\xi)e^{i\xi w - \xi h}\, d\xi,\qquad w\in \Cm_+.
\end{gather*}
From the Plancherel theorem we get
\begin{gather}
    \label{eq: norm of phi}
    \|\phi\|_{L^2(\R_+)} = \|T\|_{L^2(\R)} = \Big\|\frac{F}{x + i}\Big\|_{L^2(\R)}\le \Big\|\frac{e^{B/(2M)}}{x + i}\Big\|_{L^2(\R)}\ls e^{B/(2M)}.
\end{gather}
Introduce
\begin{gather*}
    G_{r}(w) = \frac{1}{\sqrt{2\pi}}\int_0^r \phi(\xi)e^{i\xi w - \xi h}\, d\xi, \qquad w\in \Cm.
\end{gather*}
By the definition we have $G_r\in \PW_{[0,r]}$ and
\begin{gather*}
    G_{r}(w) - T_h(w) = \frac{1}{\sqrt{2\pi}}\int_r^\infty \phi(\xi)e^{i\xi w - \xi h}\, d\xi, \qquad w\in \Cm_+.
\end{gather*}
The Plancherel theorem, \eqref{eq: norm of phi} and the assertion $2rh\ge B$ give
\begin{gather*}
    \|G_r - T_h\|^2_{L^2(\R)} = \int_r^\infty|\phi(\xi)|^2e^{-2\xi h}\, d\xi 
    \le
    e^{-2rh}\|\phi\|_{L^2(\R)}^2  
    \ls e^{-2rh +B/M} \ls 1.
\end{gather*}
We can also obtain the pointwise estimate on $G_r - T_h$ in $\R\cup\Cm_+$. 
Denote $I(w) = \Im w + h$.
It follows that
\begin{gather}
    \label{eq: G_r -T_h at point}
    |G_{r}(w) - T_h(w)| \ls \int_r^\infty |\phi(\xi)|e^{-I(w)\xi}\, d\xi
\le \|\phi\|_{L^2(\R_+)}\frac{e^{-rI(w)/2}}{\sqrt{I(w)}} 
    \ls \frac{e^{B/(2M)-rI(w)/2}}{\sqrt{I(w)}}.
\end{gather}
For $w = z$ we have $I(w)\ge \Im z\ge 1$ hence $\left|G_{r}(z) - T_h(z)\right| ^2
    \ls \exp\left(-r\Im z + B/M\right).$
We also have $|T_h(z)|\gs |z|^{-1}$ therefore for $r\gs \log |z|$ we get
\begin{gather*}
    |G_r(z)|\gs |T_h(z)| = \frac{1}{|z|}\exp\left(\frac{1}{4M} \int_\R H_B P_z\,dt\right).
\end{gather*}
Inequality \eqref{eq: G_r -T_h at point} on the real line and the assertions $\log h^{-1}\le B$, $rh\gs B$ give 
\begin{gather*}
    |G_r(x) - T_h(x)|\ls \frac{e^{B/(2M)-rh/2}}{\sqrt{h}} \ls 1, \qquad x\in \R.
\end{gather*}
Recall that we have $\sigma\in\Meas$ and \eqref{eq: L2 sigma in terms of L2 and Linfty} applies. It implies
\begin{gather*}
    \|G_r - T_h\|_{L^2(\R, \sigma)}\ls \|G_r - T_h\|_{L^2(\R)} + \|G_r - T_h\|_{L^\infty(\R)}\ls 1,
    \\
    \|G_r\|_{L^2(\R, \sigma)}\le \|G_r - T_h\|_{L^2(\R, \sigma)} + \|T_h\|_{L^2(\R, \sigma)}\ls 1.
\end{gather*}
Finally, we can write 
\begin{gather*}
    m_r(z) \le \frac{\|G_r\|_{L^2(\R, \sigma)}}{|G_{r}(z)|} \ls |z|\exp\left(-\frac{1}{4M} \int_\R H_B P_z\,dt\right).
\end{gather*}
Taking the logarithm we finish the proof.
\end{proof}

\subsection{Proof of Theorem \ref{thm: theorem lower bound}} 
The following two technical lemmas are essentially Lemmas 7 and 8 from \cite{Borichev2020}, we prove them in Section \ref{sec: proof of lemmas} below.

\newcommand{\firstlemma}{
Let $\phi:(0,\infty)\to(0,\infty)$ be an unbounded decreasing function, let
$\widetilde \phi$ be its even extension to $\mathbb R$ and $\widetilde\phi_B = \min(\widetilde \phi, B)$.
The inequality
\begin{gather*}
\widetilde \phi_B*P_{ih}\ls \widetilde \phi
\end{gather*}
holds everywhere on $\mathbb R$, provided that at least one of the following
conditions holds\textup{:}
\begin{enumerate}
\item[$(i)$] the function $x\mapsto x^2\phi(x)$ is nondecreasing and $h\ls \phi^{-1}(B)$\textup{;}
\item[$(ii)$] $\phi$ satisfies \eqref{Reg2} and $h\ls 1/B$.
\end{enumerate}
}
\begin{lemma}[Lemma 7, \cite{Borichev2020}]\label{lem: phi poisson}
\firstlemma
\end{lemma}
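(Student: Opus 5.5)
Write $x_0 = \phi^{-1}(B)$, so that $\widetilde\phi_B = B$ on $[-x_0,x_0]$ and $\widetilde\phi_B = \widetilde\phi$ outside this interval. By evenness it is enough to treat $x\ge 0$. The idea is to compare the average $(\widetilde\phi_B*P_{ih})(x)=\int \widetilde\phi_B(x-ht)P_i(t)\,dt$ with $\widetilde\phi(x)$ in two regimes, $x\le 2x_0$ and $x\ge 2x_0$. In the first regime $\widetilde\phi(x)\ge \phi(2x_0)$, and it suffices to bound the convolution by $B$. We then need $\phi(2x_0)\gs B=\phi(x_0)$. Under (ii) this is exactly the doubling condition $\phi(x/2)\ls \phi(x)$. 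Under (i), monotonicity of $x^2\phi$ gives $\phi(2x_0)\ge \phi(x_0)/4$.

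For $x\ge 2x_0$ we split the integral according to whether $|x-s|\le x/2$ or not, where $s$ is the integration variable. On $|s-x|\le x/2$ we have $|s|\ge x/2$, hence $\widetilde\phi_B(s)\le \phi(x/2)\ls\phi(x)$. Under (ii) this is doubling again; under (i) the bound $x^2\phi$ nondecreasing gives $\phi(x/2)\le 4\phi(x)$. Since $P_{ih}$ has total mass one, this part is $\ls\phi(x)$.

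The remaining part, $|s-x|>x/2$, is the main obstacle. There $P_{ih}(x-s)\le h/(\pi (x/2)^2)$ for $|s|\le 2x$, and $P_{ih}(x-s)\ls h/s^2$ for $|s|\ge 2x$. We bound $\widetilde\phi_B$ by $B$ when $|s|\le x_0$ and by $\phi(|s|)$ otherwise, which leaves the tail
\begin{gather*}
\frac{h}{x^2}\left(Bx_0+\int_{x_0}^{2x}\phi(s)\,ds\right)+h\int_{2x}^\infty\frac{\phi(s)}{s^2}\,ds.
\end{gather*}
It must be shown to be $\ls\phi(x)$.

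Under (i), monotonicity of $s^2\phi(s)$ on $[x_0,2x]$ gives $\phi(s)\le 4x^2\phi(2x)/s^2$, so $\int_{x_0}^{2x}\phi\ls x^2\phi(x)/x_0$. Also $Bx_0/x^2 \le \phi(x)/x_0$, since $x_0^2\phi(x_0)\le x^2\phi(x)$. For the far tail, $\int_{2x}^\infty \phi(s)s^{-2}\,ds\le \phi(x)/x$ because $\phi$ is decreasing. Altogether the tail is $\ls (h/x_0)\phi(x)\ls\phi(x)$ by $h\ls x_0$.

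Under (ii), $\phi$ decreasing and $\phi(x)\gs 1/x$ give $\int_{x_0}^{2x}\phi\le 2x\,\phi(x_0)=2xB$ and $\int_{2x}^\infty\phi(s)s^{-2}\,ds\le \phi(x)/x$. The tail is therefore $\ls hB/x + hBx_0/x^2 + h\phi(x)/x$. The estimates $hB\ls 1$ and $1/x\ls\phi(x)$ control the first two terms. For the last, $x\ge 2x_0$ and $\phi(x_0)=B$ combined with $\phi(x)\gs 1/x$ yield $x\gs 1/B\gs h$.

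The first regime $x\le 2x_0$ also needs checking under (i) when $h$ is large compared with $x$. Bounding the convolution trivially by $B$ avoids this issue entirely. So the only delicate step is the tail estimate above, where the hypothesis relating $h$ and $B$ is used.
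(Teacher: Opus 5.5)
Your proof is correct and follows essentially the same route as the paper. Both arguments use the trivial bound by $B$ when $x\le 2\phi^{-1}(B)$, the doubling $\phi(x/2)\lesssim\phi(x)$ under either hypothesis, the kernel bound $P_{ih}(t)\le h/(\pi t^2)$ away from $x$, and the estimate of $\int\phi_B$ via monotonicity of $x^2\phi$ under (i) or by $Bx$ under (ii). The only difference is the split: the paper splits according to whether $|y|\le x/2$ or $|y|\ge x/2$, so the whole region $|y|\ge x/2$ is absorbed by the bound $\phi_B(x/2)$, whereas your split at $|s-x|=x/2$ creates the extra far tail $h\int_{2x}^\infty\phi(s)s^{-2}\,ds$, which you handle correctly.
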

Notice that \eqref{Reg1} implies assertion $(i)$ of this lemma.

\newcommand{\secondlemma}{
Let $H$ be subordinated to a regular function $\phi$ and let $H_B=\min(H,B)$.
Then everywhere on $\mathbb R$ we have
\begin{gather*}
H_B*P_{ih}\ls H 
\end{gather*}
provided that
$h\ls\varphi^{-1}(B)$ when $\varphi$ satisfies condition \eqref{Reg1}, and
$h\ls B^{-1}$ when $\varphi$ satisfies condition \eqref{Reg2}.
}

\begin{lemma}[Lemma 8, \cite{Borichev2020}]
\label{lemma: poisson kernel of subordinated function}
\secondlemma
\end{lemma}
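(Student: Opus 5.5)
The plan is to reduce everything to a pointwise comparison at a fixed point $x$, using the Lipschitz structure of $\psi=\varphi^{-1}\circ H$. By subordination, $\psi$ is a positive $1$-Lipschitz function with $H=\varphi\circ\psi$ on $\R$. Since $\varphi$ maps into $(\lim_{\infty}\varphi,\infty)$, the function $\varphi^{-1}$ takes values in $(0,\infty)$, so $\psi>0$.

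Fix $x\in\R$ and put $s=\psi(x)>0$, so that $H(x)=\varphi(s)$. The Lipschitz bound gives $\psi(t)\ge s-|t-x|$. Because $\varphi$ is decreasing, for $|t-x|\le s/2$ this yields
\begin{gather*}
H(t)=\varphi(\psi(t))\le\varphi(s-|t-x|)\le \varphi(s/2).
\end{gather*}
For $|t-x|>s/2$ we only use the trivial bound $H_B(t)\le B$. We then split the convolution $(H_B*P_{ih})(x)$ into these two regions:
\begin{gather*}
(H_B*P_{ih})(x)\le \varphi(s/2)\int_{|u|\le s/2}P_{ih}(u)\,du + B\int_{|u|>s/2}P_{ih}(u)\,du\ls \varphi(s/2)+\min\Bigl(B,\frac{Bh}{s}\Bigr).
\end{gather*}
Here we used that $P_{ih}$ has total mass one and that its tail satisfies $\int_{|u|>s/2}P_{ih}\ls h/s$.

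The first term is handled by a doubling property of $\varphi$. Under \eqref{Reg2}, $\varphi(s/2)\ls\varphi(s)$ holds by assumption. Under \eqref{Reg1}, monotonicity of $y\mapsto y\varphi(y)$ gives $(s/2)\varphi(s/2)\le s\varphi(s)$, that is, $\varphi(s/2)\le 2\varphi(s)$. In both cases the first term is $\ls\varphi(s)=H(x)$.

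The main point is the second term, $\min(B,Bh/s)$, which must be controlled by $\varphi(s)$. Let $s_B=\varphi^{-1}(B)$.
\begin{itemize}
\item If $s\le s_B$, then $\varphi(s)\ge B$, and the bound $\min(\cdot)\le B\le\varphi(s)$ suffices.
\item If $s>s_B$ and \eqref{Reg1} holds, monotonicity of $y\varphi(y)$ gives $s\varphi(s)\ge s_B\varphi(s_B)=s_BB$. Since $h\ls s_B$ by hypothesis, this gives $Bh/s\ls Bs_B/s\le\varphi(s)$.
\item If \eqref{Reg2} holds and $h\ls 1/B$, then $Bh/s\ls 1/s\ls\varphi(s)$ by the second half of \eqref{Reg2}. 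No case split is needed here.
\end{itemize}
Combining the two estimates gives $(H_B*P_{ih})(x)\ls\varphi(\psi(x))=H(x)$, with implicit constants independent of $x$, which proves the lemma.

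The only delicate points are the upper bound on $H$ near $x$, which requires the Lipschitz direction $\psi(t)\ge \psi(x)-|t-x|$ together with the monotonicity of $\varphi$, and the far region in case \eqref{Reg1}, where the hypothesis $h\ls\varphi^{-1}(B)$ is used precisely through the monotonicity of $y\varphi(y)$.
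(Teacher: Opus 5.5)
Your proof is correct. Its skeleton matches the paper's:
\begin{itemize}
\item fix $x$ and set $s=y_x=\varphi^{-1}(H(x))$;
\item split the convolution at $|t-x|=s/2$;
\item bound the part with $|t-x|>s/2$ crudely by $B\int_{|u|>s/2}P_{ih}\ls Bh/s$;
\item absorb $Bh/s$ into $\varphi(s)$, using $h\ls 1/B$ and $\varphi(s)\gs 1/s$ under \eqref{Reg2}, or $h\ls\varphi^{-1}(B)$ and the monotonicity of $y\varphi(y)$ under \eqref{Reg1}.
\end{itemize}

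The one genuine difference is the region $|t-x|\le s/2$. The paper uses the Lipschitz bound to get $H_B(x-y)\le\varphi_B(y_x-|y|)$, dominates that integral by $2(\widetilde\varphi_B*P_{ih})(y_x)$, and then invokes Lemma \ref{lem: phi poisson}. You observe instead that $\psi(t)\ge s/2$ there, so $H\le\varphi(s/2)\ls\varphi(s)$ pointwise, and the doubling property alone finishes this part.

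Your route is more elementary and makes the lemma independent of Lemma \ref{lem: phi poisson}. The paper's comparison function $\varphi_B(y_x-|y|)$ is itself at most $\varphi(y_x/2)$ on $|y|\le y_x/2$, so the full strength of that lemma is not actually used here. Lemma \ref{lem: phi poisson} is still needed elsewhere, in case $(i)$ of Theorem \ref{thm: single deep zero}. There only $x^2\varphi(x)$ is assumed nondecreasing, and your crude bound $H_B\le B$ away from $x$ would fail. For example, with $\varphi(x)=x^{-3/2}$ and $h\simeq\varphi^{-1}(B)$, the ratio of $Bh/s$ to $\varphi(s)$ grows like $\bigl(s/\varphi^{-1}(B)\bigr)^{1/2}$.
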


\begin{proof}[Proof of Theorem \ref{thm: theorem lower bound}]
Recall that we need to prove inequality
\begin{gather}
\label{eq: main ineq in thm 3}
|\log m_r(z)| \gs \int_{\R}H_BP_z\,dx - \log |z|,
\end{gather}
where $r\varphi^{-1}(B)=B$ when $\varphi$ satisfies condition \eqref{Reg1}, and $B=\sqrt{r}$
when $\varphi$ satisfies condition \eqref{Reg2}.
By Lemma \ref{lemma: poisson kernel of subordinated function} we have $H_B*P_{ih}\ls H$ if $h = \phi^{-1}(B)$ in the case of \eqref{Reg1} and $h = B^{-1}$ when \eqref{Reg2} holds. We claim that in both situations we have
\begin{gather*}
\log h^{-1} \ls B.
\end{gather*}
This is trivial when $h = B^{-1}$, and it reduces to
\begin{gather*}
    \log \frac{1}{\phi^{-1}(B)} \ls B
\end{gather*}
when $\phi$ satisfies \eqref{Reg1}.
The latter is equivalent to $\phi(x)\gs \log\frac{1}{x}$ given by \eqref{Reg1}.
Thus, for our parameters, Lemma \ref{lemma: main lemma lower bound} applies and gives \eqref{eq: main ineq in thm 3} with the required $B$. 
The proof is concluded.
\end{proof}
\section{Examples. Measures with deep zero at one point}

\begin{thm}\label{thm: single deep zero}
Let $h$ be a positive continuous decreasing function on $(0, \infty)$ such that
\begin{gather*}
\int_{\R_+} \frac{h(x)\,dx}{1 + x^2}=+\infty.
\end{gather*}
Suppose that $h$ satisfies at least one of the following two conditions\textup{:}
\begin{enumerate}
\item[$(i)$] the function $\theta\mapsto\theta^2h(\theta)$ does not decrease, and $|\log\theta|=O(h(\theta))$ as $\theta \to 0$\textup{;}
\item[$(ii)$] $\displaystyle\limsup_{a\to\infty}\frac{h^{-1}(a)}{h^{-1}(2a)} < 2$.
\end{enumerate}
Fix the point $x_0\in \R$, define $H = h(|x - x_0|)$ and let $\sigma$ be an absolutely continuous measure on $\R$ with the density $e^{-H(x)}$. Then for every fixed $z$ with $\Im z\ge 1$ and large $r$ we have
\begin{gather*}
|\log m_r(z)|
\simeq
\int_\R H_B(x)P_z(x)\, dx,
\end{gather*}
where $B$ solves the equation $r\,h^{-1}(B)=B$ and $H_B=\min(H,B)$ as before.
\end{thm}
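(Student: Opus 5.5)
The plan is to prove the two inequalities separately: the upper bound for $|\log m_r(z)|$ from Theorem \ref{thm: upper bound}, and the lower bound from Lemma \ref{lemma: main lemma lower bound} combined with Lemma \ref{lem: phi poisson}. In both directions $z$ is fixed. Since $H_B\uparrow H$ as $B\to\infty$ and $\int_\R H P_z\,dx=+\infty$ by the divergence assumption, monotone convergence gives $\int_\R H_BP_z\,dx\to\infty$ as $r\to\infty$. So for large $r$ the $\log|z|$ terms can be absorbed, and it suffices to prove the two bounds up to additive $O(\log|z|)$.

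\emph{Upper bound.} I would apply Theorem \ref{thm: upper bound} with $H$ itself; then $d\sigma=e^{-H}dt\ge e^{-H}dt/(1+t^2)$. The set $\{H>a\}$ is the interval $\{|x-x_0|<h^{-1}(a)\}$, whose measure $2h^{-1}(a)$ is finite and continuous in $a$ for large $a$. The theorem therefore gives the bound with $B'$ determined by $2rh^{-1}(B')=B'$. Put $g(a)=a/h^{-1}(a)$, which is increasing. Then $g(B)=r$ and $g(B')=2r$. Since $h^{-1}$ decreases, $g(2B)\ge 2B/h^{-1}(B)=2r$, hence $B\le B'\le 2B$. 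It follows that $H_{B'}\le 2H_B$, and therefore $|\log m_r(z)|\ls \log|z|+\int_\R H_BP_z\,dx$.

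\emph{Lower bound.} Here I would verify the hypotheses of Lemma \ref{lemma: main lemma lower bound}. First, $\sigma\in\Meas$ because $e^{-H}\le 1$ for large $h$, or more generally $e^{-H}$ is bounded since $H\ge \inf h$. Second, $\int_\R e^{H}d\sigma/(1+x^2)=\int_\R dx/(1+x^2)<\infty$. The function $H$ is a translate of the even extension $\widetilde h$. Translation commutes with convolution by $P_{ih}$, so Lemma \ref{lem: phi poisson} yields $H_B*P_{i\eta}\ls H$ for a suitable $\eta$, which I take as $\eta=h^{-1}(B)$. The condition $r\gs B/\eta$ then holds with equality, because $rh^{-1}(B)=B$. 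The condition $r\gs\log|z|$ holds for large $r$.

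In case $(i)$, part $(i)$ of Lemma \ref{lem: phi poisson} applies directly with $\eta=h^{-1}(B)$. The bound $\log \eta^{-1}\ls B$ follows from $|\log\theta|=O(h(\theta))$ evaluated at $\theta=h^{-1}(B)$.

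In case $(ii)$, I will show that $h$ satisfies \eqref{Reg2} near $0$. Fix $a$ large, write $x=h^{-1}(a)$, and choose $\epsilon>0$ with $h^{-1}(a)/h^{-1}(2a)\le 2-\epsilon$. Iterating gives $h^{-1}(2^ka)\ge x(2-\epsilon)^{-k}$. Choosing $k$ with $(2-\epsilon)^k\le 2$ gives $h^{-1}(2^k a)\ge x/2$, i.e.\ $h(x/2)\le 2^kh(x)$. The same iteration gives $h^{-1}(a)\gs a^{-\beta}$ with $\beta=\log_2(2-\epsilon)<1$, hence $h(x)\gs x^{-1/\beta}\gs 1/x$ for small $x$. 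Consequently $\eta=h^{-1}(B)\ls 1/B$, so part $(ii)$ of Lemma \ref{lem: phi poisson} applies with this $\eta$. Moreover $\log\eta^{-1}\ls\log B\ls B$. Lemma \ref{lemma: main lemma lower bound} then gives $|\log m_r(z)|\gs\int_\R H_BP_z\,dx-\log|z|$.

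\emph{Main obstacle.} I expect the main difficulty to be the case $(ii)$ bookkeeping: extracting \eqref{Reg2}-type control from the $\limsup$ condition, which holds only for large $a$, i.e.\ small $x$. For large $x$ one must argue that only the behaviour near $x_0$ matters. I would handle this by modifying $h$ away from $0$, noting that $H_B$ differs from $H$ only near $x_0$ while Lemma \ref{lem: phi poisson} tolerates bounded perturbations. A second, minor point is matching the value of $B$ used by the two halves, which the doubling comparison $B\le B'\le 2B$ above settles.
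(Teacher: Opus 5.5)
Your upper bound (including the comparison $B\le B'\le 2B$, which the paper leaves implicit) and your treatment of case $(i)$ are correct and match the paper. Case $(ii)$ has a genuine gap.

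\textbf{Sign errors in case $(ii)$.} The inequalities you extract from $(ii)$ are reversed. With $x=h^{-1}(a)$, condition $(ii)$ gives $h^{-1}(2a)\ge x/(2-\epsilon)>x/2$. Since $h$ decreases, this means $h(x/2)\ge 2h(x)$. That is a lower bound, not the doubling bound $h(x/2)\ls h(x)$ of \eqref{Reg2}. For example, $h(x)=\exp(x^{-p})$ satisfies $(ii)$ (the ratio tends to $1$), yet $h(x/2)/h(x)\to\infty$.

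Likewise, $h(x)\gs 1/x$ gives $h^{-1}(B)\gs 1/B$, not $h^{-1}(B)\ls 1/B$. For $h(x)=x^{-p}$ one has $h^{-1}(B)=B^{-1/p}\gg 1/B$. So even when \eqref{Reg2} holds, part $(ii)$ of Lemma \ref{lem: phi poisson} only allows $\eta\ls 1/B$. Lemma \ref{lemma: main lemma lower bound} then needs $r\gs B^2$. This is just Theorem \ref{thm: theorem lower bound} with $B=\sqrt r$, which for $h(x)=x^{-p}$ yields only $r^{(p-1)/(2p)}$ instead of the true order $r^{(p-1)/(p+1)}$.

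\textbf{The method itself fails here.} The choice $\eta=h^{-1}(B)$ cannot be rescued by modifying $h$ away from $x_0$. Take $h(x)=x^{-p}$ with $p>2$, which satisfies $(ii)$ but not $(i)$. At a fixed small distance $t$ from $x_0$,
$(H_B*P_{i\eta})(x_0+t)\gs \eta\,Bh^{-1}(B)/t^2=B^{1-2/p}/t^2\to\infty$,
while $H(x_0+t)=t^{-p}$ stays fixed. So the hypothesis $H_B*P_{i\eta}\ls H$ genuinely fails near $x_0$. The outer-function method cannot reach the truncation level $rh^{-1}(B)=B$ in this regime.

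\textbf{The missing ingredient} is the comb-domain (Martin function) construction.

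First, the paper applies Corollary \ref{cor: Martin single interval} to the interval $L=(x_0-h^{-1}(2B),x_0+h^{-1}(2B))$, on which $H>2B$. The constraint of the corollary holds because $r=B/h^{-1}(B)\le 2B/\bigl(2h^{-1}(2B)\bigr)$. Using $(ii)$ in the form $h^{-1}(2B)\ge h^{-1}(B)/(2-\epsilon)$, one gets
$|\log m_r(z)|\gs r\,h^{-1}(2B)^2\,\Im z/|z-x_0|^2\simeq Bh^{-1}(B)P_z(x_0)$.

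Second, condition $(ii)$ shows that this core contribution dominates. One has $\int_\R H_BP_z\,dx\ls P_z(x_0)\bigl(Bh^{-1}(B)+\int_{h^{-1}(B)}^{\delta}h(t)\,dt\bigr)+O(1)$. On each level set $\{2^{-k-1}B<h\le 2^{-k}B\}$, the integral of $h$ is at most $2^{-k}B\,h^{-1}(2^{-k-1}B)\le 2^{-k}(2-\epsilon)^{k+1}Bh^{-1}(B)$. These bounds form a geometric series, so the integral term is $\ls Bh^{-1}(B)$.

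Finally, $Bh^{-1}(B)\to\infty$ under $(ii)$, so the $O(1)$ and $\log|z|$ terms are absorbed.
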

\begin{proof}
The claim of the theorem follows from the following two estimates:
\begin{gather*}
|\log m_r(z)|
\ls
\int_{\mathbb R} H_B(x)P_z(x)\,dx,
\qquad
|\log m_r(z)|
\gs
\int_{\mathbb R} H_B(x)P_z(x)\,dx.
\end{gather*}
The first estimate is given by Theorem \ref{thm: upper bound} and does not require any
regularity assumptions on $h$. Conditions $(i)$ and $(ii)$ are only needed for the second estimate. Both conditions imply $\sigma'\in L^\infty(\R)$ and consequently $\sigma\in\Meas$. In the first case Lemma \ref{lem: phi poisson} applies and Lemma \ref{lemma: main lemma lower bound} gives the required  inequality.  
When condition $(ii)$ holds, we use Corollary \ref{cor: Martin single interval}: every $x\in (x_0 - h^{-1}(2B), x_0 + h^{-1}(2B))$ satisfies $H(x) > 2B$ and $r = \frac{B}{h^{-1}(B)}\le \frac{2B}{2h^{-1}(2B)}$ hence
\begin{gather*}
    |\log m_r(z)|\gs \frac{r(h^{-1}(2B))^2\Im z}{|z - x_0|^2}\simeq \frac{r(h^{-1}(B))^2\Im z}{|z - x_0|^2}\simeq B h^{-1}(B) P_z(x_0).
\end{gather*}
Fix a sufficiently small $\delta > 0$. We have 
\begin{gather*}
    \int_{\mathbb R} H_B(x)P_z(x)\,dx  = \int_{\mathbb R} H_B(x + x_0)P_z(x + x_0)\,dx \simeq \int_0^\delta H_B(x + x_0)P_z(x + x_0)\,dx + O(1)
    \\
    \simeq P_z(x_0)\left(Bh^{-1}(B) + \int_{h^{-1}(B)}^\delta H_B(x + x_0)\,dx\right) + O(1), \qquad B\to\infty.
\end{gather*}
To finish the proof we notice that the second term is bounded by a constant multiple of the first one. Indeed, splitting the integral into dyadic level sets of $h$, condition $(ii)$ implies that the resulting contributions form a geometrically decreasing series.
\end{proof}

\begin{corol}
Let $x_0\in\mathbb R$ and $z\in\mathbb C_+$ with $\Im z\ge 1$ be fixed and let $\sigma$ be an absolutely continuous measure on $\R$ with density $\exp\bigl(-h(|x-x_0|)\bigr)$.
Then, for all sufficiently large $r$, we have
\begin{enumerate}
    \item if $\displaystyle h(x)=x^{-1}\log^{-1}(1/x)$ for all sufficiently small $x > 0$  with any positive decreasing continuation to $(0, \infty)$, then $\displaystyle |\log m_r(z)|\simeq \log\log r$;
    \item if $ p>1$  and $\displaystyle h(x)=x^{-1}\log^p(1/x)$ for all sufficiently small $x > 0$  with any positive decreasing continuation to $(0, \infty)$, then $\displaystyle |\log m_r(z)|\simeq (\log r)^{p + 1}$;
    \item if $p > 1$ and $\displaystyle h(x)=x^{-p}$, then $\displaystyle |\log m_r(z)|\simeq r^{(p-1)/(p+1)}$;
    \item if $p > 0$ and $\displaystyle h(x)=\exp\left(x^{-p}\right)$, then $\displaystyle |\log m_r(z)|\simeq r(\log r)^{-2/p}$.
\end{enumerate}

\begin{proof}
Let $B$ be the solution of $r h^{-1}(B)=B$ and $h_{B}(x):=\min\bigl\{h(|x-x_0|),B\bigr\}$. Fix a sufficiently small $\delta>0$. Similarly to the proof of the previous theorem, we get
\begin{gather*}
\int_{\mathbb R}h_{B}(x)P_z(x)\,dx\simeq 2P_z(x_0)\left(Bh^{-1}(B)+\int_{h^{-1}(B)}^{\delta}h(t)\,dt\right).
\end{gather*}
Theorem \ref{thm: single deep zero} then gives
\begin{gather}
    \label{eq: asymp in four cases two terms}
    \frac{1}{P_z(x_0)}|\log m_r(z)| \simeq Bh^{-1}(B)+\int_{h^{-1}(B)}^{\delta}h(t)\,dt.
\end{gather}
In every case of the given function $h$ we estimate the asymptotic behaviour of the right-hand side explicitly, see Table \ref{table :examples of h}. The entries in the row corresponding to $h(x) = x^{-p}$ with $p > 1$ are exact while the entries in the other rows are given up to a multiplicative factor $1 + o(1)$ as $r\to\infty$. We notice that in the first two cases the integral term in \eqref{eq: asymp in four cases two terms} dominates, in the third case the terms are asymptotically the same and in the case (4) the first term prevails.

\begin{table}[ht]
\centering
\renewcommand{\arraystretch}{1.8}
\setlength{\tabcolsep}{10pt}
\resizebox{\textwidth}{!}{%
\begin{tabular}{ccccc}
\toprule
$h(x)$ & $B$ & $h^{-1}(B)$ & $Bh^{-1}(B)$ & $\displaystyle\int_{h^{-1}(B)}^{\delta}h(t)\,dt$ \\
\midrule
$\displaystyle \frac{1}{x\log(1/x)} $ & $\displaystyle \sqrt{\frac{2r}{\log r}}$ & $\displaystyle \sqrt{\frac{2}{r\log r}}$ & $\displaystyle \frac{2}{\log r}$ & $\displaystyle\log\log r$
\\[5mm]
$\displaystyle \frac{\log^p(1/x)}{x}$ & $\displaystyle  \sqrt{\frac{r \log^p r}{2^p}}$ & $\displaystyle \sqrt{\frac{\log^p r}{r2^p}}$ & $\displaystyle \frac{\log^pr}{2^p}$ & $\displaystyle \frac{(\log r)^{p+1}}{(p+1)2^{p+1}}$ 
\\[5mm]
$\displaystyle x^{-p}$ & $\displaystyle r^{p/(p+1)}$ & $\displaystyle r^{-1/(p+1)}$ & $\displaystyle r^{(p-1)/(p+1)}$ & $\displaystyle \frac{r^{(p-1)/(p+1)}}{p - 1}$
\\[2mm]
$\displaystyle\exp\left(x^{-p}\right)$ & $\displaystyle \frac{r}{(\log r)^{1/p}}$ & $\displaystyle \frac{1}{(\log r)^{1/p}}$ & $\displaystyle\frac{r}{(\log r)^{2/p}}$ & $\displaystyle\frac{r}{p(\log r)^{1+2/p}}$ \\[4mm]
\end{tabular}%
}
\caption{Asymptotic behaviour of $B$, $h^{-1}(B)$, and the two contributions to the integral in \eqref{eq: asymp in four cases two terms}.}
\label{table :examples of h}
\end{table}
\end{proof}
\end{corol}

\section{Proofs of auxiliary results}

\subsection{Embedding of Paley-Wiener spaces}\label{sect: paley-wiener embedding}
Throughout this section, let $r>0$ and let $\sigma$ be a positive Borel measure on $\mathbb R$. Recall that $\PW_{[0,r]}$ is the Paley--Wiener space of all functions $f\in L^2(\mathbb R)$ whose Fourier transforms are supported in $[0,r]$.

\begin{prop}[Theorem 1, Lemma 3 in \cite{lin1965equivalent}]
The inclusion $\PW_{[0,r]}\subset L^2(\mathbb R,\sigma)$ holds if and only if
\begin{gather*}
\sup_{x\in\mathbb R}\sigma([x,x+1])<\infty.
\end{gather*}
Moreover, whenever this condition is satisfied, the embedding $\PW_{[0,r]}\hookrightarrow L^2(\mathbb R,\sigma)$ is continuous.

\end{prop}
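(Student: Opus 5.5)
We need to prove both directions and continuity. The plan is to use the Plancherel--Pólya type inequality for Paley--Wiener functions together with the closed graph theorem.

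\emph{Sufficiency.} Assume $S:=\sup_x\sigma([x,x+1])<\infty$. For $f\in\PW_{[0,r]}$ the function $|f|^2$ is subharmonic and of exponential type, and we will use the Plancherel--Pólya inequality
\begin{gather*}
\sup_{t\in[k,k+1]}|f(t)|^2\ls C(r)\int_{k-1}^{k+2}|f(t)|^2\,dt,
\end{gather*}
which follows from the mean value property of $|f|^2$ on discs. The discs are controlled because $\int_\R|f(x+iy)|^2dx\le e^{2r|y|}\|f\|_{L^2}^2$, by the Paley--Wiener representation. Summing over $k\in\mathbb Z$ gives
\begin{gather*}
\int_\R|f|^2\,d\sigma\le \sum_k\sigma([k,k+1])\sup_{[k,k+1]}|f|^2\ls S\,C(r)\|f\|_{L^2(\R)}^2 .
\end{gather*}
This proves both the inclusion and the continuity at once.

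\emph{Necessity.} Assume $\PW_{[0,r]}\subset L^2(\R,\sigma)$. First we show the embedding is continuous by the closed graph theorem. If $f_n\to f$ in $\PW_{[0,r]}$ and $f_n\to g$ in $L^2(\sigma)$, then $f_n\to f$ pointwise, since point evaluations are continuous on $\PW_{[0,r]}$. A subsequence also converges $\sigma$-a.e. to $g$, so $g=f$ $\sigma$-a.e. Hence $\|f\|_{L^2(\sigma)}\le C\|f\|_{L^2(\R)}$. Next we take a fixed test function $f_0\in\PW_{[0,r]}$ with $|f_0|\ge c>0$ on $[0,1]$, for instance
\begin{gather*}
f_0(w)=e^{irw/2}\frac{\sin(rw/2)}{rw/2}
\end{gather*}
after rescaling if $r$ is small so that no zero lies in $[0,1]$, or a small translate. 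Applying the inequality to the translates $f_0(\cdot-x)\in\PW_{[0,r]}$, which have the same $L^2$ norm, gives $c^2\sigma([x,x+1])\le C^2\|f_0\|_{L^2}^2$ uniformly in $x$.

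The main obstacle is the precise Plancherel--Pólya local estimate with constants depending only on $r$. Its standard proof bounds $|f(t)|^2$ by the average of $|f|^2$ over a unit disc and then integrates the strip estimate in $y$. This is routine, but it must be stated carefully.
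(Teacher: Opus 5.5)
Your overall plan works, and the necessity half is the paper's argument: the closed graph theorem plus translates of a fixed test function. Your verification that the graph is closed is more explicit than the paper's one-line appeal.

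The displayed local inequality in the sufficiency half, however, is false as stated. No constant $C(r)$ gives $\sup_{[k,k+1]}|f|^2\le C(r)\int_{k-1}^{k+2}|f|^2\,dt$ for all $f\in\PW_{[0,r]}$.

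\emph{Why it fails.} Restrictions of $\PW_{[0,r]}$ to $J=[k-1,k+2]$ are dense in $L^2(J)$: if $h\in L^2(J)$ is orthogonal to all of them, then the entire function $\xi\mapsto\int_J e^{ix\xi}\,\overline{h(x)}\,dx$ vanishes on $[0,r]$, hence $h=0$. Such a bound would then make approximating sequences uniformly Cauchy on $[k,k+1]$. That would force every element of $L^2(J)$ to agree a.e.\ on $[k,k+1]$ with a continuous function, which is absurd.

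\emph{Why your sketch does not give it.} The mean value property yields $\sup_{[k,k+1]}|f|^2\le\frac1\pi\iint_{R_k}|f|^2\,dA$ with $R_k=[k-1,k+2]\times[-1,1]$. This is an area integral off the real line. The strip bound $\int_\R|f(x+iy)|^2\,dx\le e^{2r|y|}\|f\|^2_{L^2(\R)}$ is global in $x$, so it cannot be localized to $J$.

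\emph{The fix.} Sum before using the strip bound. Almost every point of $\R\times[-1,1]$ lies in exactly three rectangles $R_k$, so
\[
\sum_{k}\sup_{[k,k+1]}|f|^2\le\frac{3}{\pi}\int_{-1}^{1}\int_\R|f(x+iy)|^2\,dx\,dy\le\frac{6}{\pi}e^{2r}\|f\|_{L^2(\R)}^2 .
\]
This is all your summation against $\sigma([k,k+1])$ needs. The ``main obstacle'' you single out is an estimate that does not exist; only its summed version is true, and it suffices.

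With this repair, your sufficiency proof is a genuinely different route from the paper's. The paper stays on the real line. It picks $x_n\in[n,n+1]$ with $|f(x_n)|^2\le\int_n^{n+1}|f|^2$ and writes $|f(x)|^2\le|f(x_n)|^2+2\int_n^{n+1}|ff'|$. It then uses the bound $\|f'\|_{L^2}\le r\|f\|_{L^2}$ from Plancherel, obtaining the constant $(2+r^2)\sup_x\sigma([x,x+1])$. A genuinely local estimate is possible there precisely because $f'$ appears on the right. Your Plancherel--P\'olya route uses subharmonicity and the bounds on horizontal lines. It gives a constant of order $e^{2r}$, which is irrelevant for fixed $r$, and applies to any $L^2$ function of exponential type, but it is less elementary.

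One small correction in the necessity part. Your test function equals $\frac{e^{irw}-1}{irw}$, which has zeros at $2\pi k/r$. So zeros in $[0,1]$ are a problem for large $r$, not small $r$, and translating does not help. Two fixes work:
\begin{itemize}
\item use bandwidth $s=\min(r,\pi)$ together with $\PW_{[0,s]}\subset\PW_{[0,r]}$; the modulus is then at least $2/\pi$ on $[0,1]$;
\item or, as the paper does, take a short interval $[-\delta,\delta]$ on which the test function is bounded below and cover $[x,x+1]$ by finitely many translates of it.
\end{itemize}
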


\begin{proof}
Suppose first that
\begin{gather*}
M:=\sup_{x\in\mathbb R}\sigma([x,x+1])<\infty.
\end{gather*}
Since every function in  $\PW_{[0,r]}$ is locally absolutely continuous, for each $n$ there exists $x_n\in[n,n+1]$ such that
\begin{gather*}
|f(x_n)|^2\le \int_n^{n+1}|f(t)|^2\,dt.
\end{gather*}
Hence, for every $x\in[n,n+1]$,
\begin{gather*}
|f(x)|^2
\le |f(x_n)|^2+2\int_n^{n+1}|f(t)f'(t)|\,dt
\le 2\int_n^{n+1}|f(t)|^2\,dt+\int_n^{n+1}|f'(t)|^2\,dt.
\end{gather*}
It follows that
\begin{gather*}
\int_{\mathbb R}|f(x)|^2\,d\sigma(x)
=\sum_{n\in\mathbb Z}\int_{n}^{n + 1}|f(x)|^2\,d\sigma(x)
\le M\left( 2\int_\R|f(x)|^2\,dx + \int_\R|f'(x)|^2\,dx\right).
\end{gather*}
The Plancherel theorem gives
\begin{gather*}
    \int_{\mathbb R}|f(x)|^2\,d\sigma(x) \le M(2 + r^2)\int_\R|f(x)|^2\,dx,
\end{gather*}
which implies the required inclusion.
\medskip

Conversely, assume that $\PW_{[0,r]}\subset L^2(\mathbb R,\sigma)$.
Consider the identity operator
\begin{gather*}
J:\PW_{[0,r]}\longrightarrow L^2(\mathbb R,\sigma),
\qquad Jf=f,
\end{gather*}
where $\PW_{[0,r]}$ is equipped with its usual $L^2(\mathbb R)$ norm. Operator $J$ is continuous by the closed graph theorem hence
there exists a constant $C>0$ such that
\begin{gather*}
\|f\|_{L^2(\R,\sigma)}
\le C\|f\|_{L^2(\mathbb R)},
\qquad f\in \PW_{[0,r]}.
\end{gather*}
Choose a function $\varphi\in \PW_{[0,r]}$ with  $\phi(0)\neq0$. For instance, we may take $\phi(x)=\frac{e^{irx} -1}{x}$. There exist $\delta>0$ and $c>0$ such that $|\phi(x)|\ge c$ for $|x|\le \delta$.
Now we write
\begin{gather*}
c^2\sigma([t-\delta,t+\delta])
\le \int_{\mathbb R}|\phi(x - t)|^2\,d\sigma(x)
\le C^2\|\phi\|_{L^2(\mathbb R)}^2.
\end{gather*}
To conclude we notice that every interval of length $1$ can be covered by the fixed number of intervals of length $\delta$.
\end{proof}

\begin{prop}
The space  $\PW_{[0,r]}$, equipped with the norm inherited from $L^2(\mathbb R,\sigma)$ 
is a reproducing kernel Hilbert space if and only if
\begin{gather*}
\int_{\mathbb R}|f(x)|^2\,dx
\simeq
\int_{\mathbb R}|f(x)|^2\,d\sigma(x),
\qquad f\in \PW_{[0,r]}.
\end{gather*}
\end{prop}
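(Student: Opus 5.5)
We have to prove both implications. What "reproducing kernel Hilbert space" needs is two things: the space $\PW_{[0,r]}$ with the norm $\|\cdot\|_{L^2(\R,\sigma)}$ must be complete, and the evaluation functionals $f\mapsto f(z)$ must be bounded. Tacitly we assume $\PW_{[0,r]}\subset L^2(\R,\sigma)$. By the previous proposition this embedding is continuous, so
\begin{gather*}
\|f\|_{L^2(\R,\sigma)}\le C\|f\|_{L^2(\R)}, \qquad f\in\PW_{[0,r]}.
\end{gather*}
Hence the whole content of the equivalence is the reverse inequality $\|f\|_{L^2(\R)}\ls\|f\|_{L^2(\R,\sigma)}$, which is exactly closedness of the range of the identity map $J$ from the previous proof.

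\emph{Sufficiency.} Assume the norms are equivalent. Then $(\PW_{[0,r]},\|\cdot\|_{L^2(\R,\sigma)})$ is complete, since Cauchy sequences in one norm are Cauchy in the other and $\PW_{[0,r]}$ is closed in $L^2(\R)$. Point evaluations are bounded because of the estimate
\begin{gather*}
|f(z)|\le (2\pi)^{-1/2}\|\hat f\|_{L^1[0,r]}\le C(r,z)\|f\|_{L^2(\R)}\ls\|f\|_{L^2(\R,\sigma)}.
\end{gather*}
It remains to check that the $\sigma$-norm is genuinely a norm on $\PW_{[0,r]}$, i.e.\ that $f=0$ $\sigma$-a.e.\ forces $f\equiv0$. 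This follows at once from the equivalence of norms.

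\emph{Necessity.} Assume the space is an RKHS. By definition this includes completeness in the $\sigma$-norm. Now $J$ is a continuous bijection between the Banach spaces $(\PW_{[0,r]},\|\cdot\|_{L^2})$ and $(\PW_{[0,r]},\|\cdot\|_{L^2(\sigma)})$, so the open mapping (bounded inverse) theorem gives $J^{-1}$ bounded, i.e.\ $\|f\|_{L^2(\R)}\ls\|f\|_{L^2(\R,\sigma)}$.

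The main obstacle is interpretive: whether ``reproducing kernel Hilbert space'' is read as including completeness. If it is not, the implication fails. An incomplete inner-product space can still have bounded point evaluations, so a weaker reading would require a separate argument. If I had to handle that weaker reading, I would pass to the completion and show that it cannot be realized as a space of functions unless the norms are equivalent. I expect the intended reading is the standard one, namely a Hilbert space (hence complete) with continuous point evaluations. With that reading the open mapping theorem closes the argument, and I would state this convention explicitly at the start of the proof.
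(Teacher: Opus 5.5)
Your argument is correct and matches the paper's proof. Equivalence of norms gives completeness and bounded point evaluations through the Fourier representation. Conversely, completeness (which the paper, like you, takes as part of the definition of a reproducing kernel Hilbert space), the continuity of $J$ from the preceding proposition, and the open mapping theorem give the reverse inequality.

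One small slip: the bound $|f(z)|\le (2\pi)^{-1/2}\|\hat f\|_{L^1[0,r]}$ needs an extra factor $e^{r|\Im z|}$ when $\Im z<0$. This is harmless, since you absorb it into $C(r,z)$ in the next step.
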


\begin{proof}
Suppose first that the two norms are equivalent, there exists a constant $C>0$ such that
\begin{gather}
\label{eq: double norm ineq in prop}
\frac{1}{C}\|f\|_{L^2(\mathbb R)}
\le \|f\|_{L^2(\R,\sigma)}
\le C\|f\|_{L^2(\mathbb R)},
\qquad f\in \PW_{[0,r]}.
\end{gather}
Since  $\PW_{[0,r]}$ is complete in its usual $L^2(\mathbb R)$ norm, it is also complete in the $L^2(\R,\sigma)$ norm.
Moreover, for every $z\in\mathbb C$, the Fourier representation and the Cauchy--Schwarz inequality imply
\begin{gather*}
|f(z)|
\ls \sqrt{r}e^{|\Im z|r}\|f\|_{L^2(\mathbb R)}\le C\sqrt{r}e^{|\Im z|r}\|f\|_{L^2(\R,\sigma)},
\qquad f\in \PW_{[0,r]}.
\end{gather*}
Thus, every point evaluation is continuous with respect to the $L^2(\R,\sigma)$ norm, and hence  $\PW_{[0,r]}$ is a reproducing kernel Hilbert space.

Conversely, suppose that  $\PW_{[0,r]}$, equipped with the $L^2(\R,\sigma)$ norm, is a reproducing kernel Hilbert space. In particular, it is complete, and every function in  $\PW_{[0,r]}$ belongs to $L^2(\R,\sigma)$. By the preceding proposition, the identity operator 
\begin{gather*}
J:\bigl(\PW_{[0,r]},\|\cdot\|_{L^2(\mathbb R)}\bigr)
\longrightarrow
\bigl(\PW_{[0,r]},\|\cdot\|_{L^2(\R,\sigma)}\bigr)
\end{gather*}
is continuous, i.e., the right inequality in \eqref{eq: double norm ineq in prop} holds with some $C > 0$. By the open mapping theorem $J^{-1}$ is also continuous and the left inequality in \eqref{eq: double norm ineq in prop} is also satisfied.
\end{proof}

\subsection{Estimates of convolutions with Poisson kernel}\label{sec: proof of lemmas} 

\begin{proof}[Proof of Lemma \ref{lem: phi poisson}]
Before proceeding to the proof notice that the assertion $\phi(x/2)\ls \phi(x)$ from \eqref{Reg2} also holds under the assumption (i):
\begin{gather*}
    (x/2)^2\phi(x/2)\le x^2\phi(x), \qquad \phi(x/2)\le 4 \phi(x). 
\end{gather*}
Since both $\tilde\phi$ and $P_{ih}$ are even it suffices to consider only $x\ge 0$. 
If $B\le \phi(x/2)$, then the lemma is trivial. Indeed, we have
\begin{gather*}
(P_{ih}*\widetilde \phi_B)(x)
\le
\max_{\mathbb R}\widetilde \phi_B
=
B
\le
\phi(x/2)
\ls
\phi(x).
\end{gather*}
Thus we may assume $B\ge \phi(x/2)$ or, equivalently, $x\ge 2\phi^{-1}(B)$. We claim that in this situation $h\ls x$ holds: condition $(i)$ gives $h\le \phi^{-1}(B)\le x$ and condition $(ii)$ implies $h\ls\frac1B\ls \phi^{-1}(B)\le x$.
\smallskip

We rewrite the convolution as
\begin{align}
    \label{eq: convolution as two integrals}( \widetilde\phi_B*P_{ih})(x) =
    \int_{|y|\le x/2}\widetilde\phi_B(y)P_{ih}(x-y)\,dy
    +
    \int_{|y|\ge x/2}\widetilde\phi_B(y)P_{ih}(x-y)\,dy.
\end{align}
The function $ \widetilde\phi_B$ is monotone on $\mathbb R_+$, hence we have
\begin{gather}
\label{eq: first int}
\int_{|y|\ge x/2} \widetilde\phi_B(y)P_{ih}(x-y)\,dy
\le
 \widetilde\phi_B(x/2) \int_{\mathbb R}P_{ih}(x-y)\,dy
=
 \phi_B(x/2) \ls \phi(x).
\end{gather}
For the first term in \eqref{eq: convolution as two integrals} we use the estimate 
\begin{gather}
    \label{eq: estmate for Poisson kernel}
    P_{ih}(t) = \frac{h}{\pi|t - ih|^2}\le \frac{h}{\pi t^2}
\end{gather}
which gives $P_{ih}(x - y)\ls \frac{h}{(x - y)^2} \ls \frac{h}{x^2}$ when $|y|\le x/2$.
We get the inequality
\begin{gather}
\label{eq: estimate integral phi B}
\int_{|y|\le x/2}
 \widetilde \phi_B(y)P_{ih}(x-y)\,dy
\ls
\frac{h}{x^2}
\int_0^{x/2}
\phi_B(y)\,dy.
\end{gather}
Assume that condition $(i)$ holds. Then we have
\begin{gather*}
\int_0^{x/2}\phi_B(y)\,dy
=
\int_0^{\phi^{-1}(B)}\phi_B(y)\,dy
+
\int_{\phi^{-1}(B)}^{x/2}\phi_B(y)\,dy = B\phi^{-1}(B) + \int_{\phi^{-1}(B)}^{x/2}\phi(y)\,dy.
\end{gather*}
Assertion $(i)$ implies $\phi(y)\ls x^2\phi(x) / y^2$ for $y\le x$ hence
\begin{gather*}
\int_0^{x/2}\phi_B(y)\,dy
\le
B\phi^{-1}(B)
+
\int_{\phi^{-1}(B)}^{x/2}
\frac{x^2\phi(x)}{y^2}\,dy 
\le 
B\phi^{-1}(B)
+
\frac{x^2\phi(x)}{\phi^{-1}(B)}.
\end{gather*}
We also have $\phi^{-1}(B)\le x$ and from $(i)$ we get $B\phi^{-1}(B)\le \frac{x^2\phi(x)}{\phi^{-1}(B)}$. It follows that
\begin{gather*}
\int_0^{x/2}\phi_B(y)\,dy
\ls
\frac{x^2\phi(x)}{\phi^{-1}(B)}.
\end{gather*}
The latter, \eqref{eq: convolution as two integrals}, \eqref{eq: first int} and \eqref{eq: estimate integral phi B} give
\begin{gather*}
(\widetilde \phi_B*P_{ih})(x)
\le
\phi(x)
+
\frac{h}{x^2}
\cdot
\frac{x^2\phi(x)}{\phi^{-1}(B)}
=
\phi(x)\left( 1+
\frac{h }{\phi^{-1}(B)}
\right) \ls \phi(x),
\end{gather*}
where the last inequality follows from the second part of $(i)$.
\smallskip

Now assume that $\phi$ satisfies $(ii)$. The integral in \eqref{eq: estimate integral phi B} can be bounded as
\begin{gather*}
\int_0^{x/2}\widetilde \phi_B(y)\,dy
\ls Bx.
\end{gather*}
Again this with \eqref{eq: convolution as two integrals}, \eqref{eq: first int} and \eqref{eq: estimate integral phi B} give
\begin{gather*}
(P_{ih}*\widetilde \phi_B)(x)
\ls
\phi(x)
+
\frac{h}{x^2}Bx
=
\phi(x)
+
\frac{h B}{x}.
\end{gather*}
To conclude the proof we recall that $h B\ls 1$ and $1/x\ls \phi(x)$ by $(ii)$.
\end{proof}

\begin{proof}[Proof of Lemma \ref{lemma: poisson kernel of subordinated function}]
Fix $x\in\mathbb R$ and let $y_x=\phi^{-1}(H(x))$.
Similarly to the previous lemma, the claim is trivial whenever $\phi^{-1}(B)\ge y_x/2$.
Indeed, in this case we have
\begin{gather*}
    (H_B*P_{ih})(x) \le \max_{\R} H_B = B \le \phi(y_x/2) \ls \phi(y_x) = H(x).
\end{gather*}
Thus below we may assume $\phi^{-1}(B)\le y_x/2$ or, equivalently $B\ge\phi(y_x/2)$. Rewrite the convolution as
\begin{gather}
\label{eq: conv as two integrals}
(H_B*P_{ih})(x)
=
\int_{|y|>\frac{y_x}{2}} H_B(x-y)P_{ih}(y)\,dy
+
\int_{|y|\le \frac{y_x}{2}} H_B(x-y)P_{ih}(y)\,dy.
\end{gather}
For the first integral we use the simple estimate
$H_B\le B$ to obtain
\begin{gather*}
\int_{|y|>\frac{y_x}{2}}
H_B(x-y)P_{ih}(y)\,dy
\le
2B
\int_{\frac{y_x}{2}}^\infty
P_{ih}(y)\,dy.
\end{gather*}
Now recall \eqref{eq: estmate for Poisson kernel} the bound $P_{ih}(y)\le h / y^2$ for the Poisson kernel. It gives
\begin{gather*}
2B\int_{\frac{y_x}{2}}^\infty P_{ih}(y)\,dy
\le
2B
\int_{\frac{y_x}{2}}^\infty
\frac{h}{y^2}\,dy
\ls \frac{Bh}{y_x}.
\end{gather*}
If $\phi$ satisfies \eqref{Reg2}, then
\begin{gather*}
\frac{Bh}{y_x}
\ls
\frac1{y_x}
\ls
\phi(y_x).
\end{gather*}
If \eqref{Reg1} holds, then $h\le\phi^{-1}(B)$ and inequality $\phi^{-1}(B) \le y_x/2$ implies $B\phi^{-1}(B)\le y_x/2\,\phi(y_x/2)$. We get
\begin{gather*}
\frac{Bh}{y_x}
\le
\frac{B\phi^{-1}(B)}{y_x}
\le
\frac{y_x/2\,\phi(y_x/2)}{y_x}
=
\frac12\phi(y_x/2)
\ls
\phi(y_x).
\end{gather*}
In both cases the first integral in \eqref{eq: conv as two integrals} does not exceed $\phi(y_x)=H(x)$.
\smallskip 

For the second integral in \eqref{eq: conv as two integrals} we use the subordination to $\phi$.
The Lipschitzness of the function $\phi^{-1}\circ H$  gives
\begin{gather*}
\phi^{-1}(H_B(x-y)) \ge \phi^{-1}(H(x-y)) \ge \phi^{-1}(H(x)) -|y| = y_x - |y|.
\end{gather*}
When $|y| < y_x$ the value on the right is positive hence
\begin{gather*}
H_B(x-y)\le \phi_B(y_x-|y|).
\end{gather*}
This shows that the second integral does not exceed
\begin{gather*}
\int_{|y|\le \frac{y_x}{2}} H_B(x-y)P_{ih}(y)\,dy
\le
\int_{|y|\le \frac{y_x}{2}}
\phi_B(y_x-|y|)P_{ih}(y)\,dy
\le
2(\widetilde \phi_B*P_{ih})(y_x).
\end{gather*}
Lemma \ref{lem: phi poisson} finishes the proof.
\end{proof}

\bibliographystyle{plain} 
\bibliography{ref}

\end{document}